\numberwithin{equation}{section}
\newcommand{\qbinom}[2]{\textstyle\begin{bmatrix} #1\\#2 \end{bmatrix} }\newcommand{\qqbinom}[2]{\textstyle\llbracket\begin{smallmatrix} #1\\#2 \end{smallmatrix}\rrbracket }
\def\Ext{\mbox{\rm Ext}\,} \def\Hom{\mbox{\rm Hom}} \def\dim{\mbox{\rm dim}\,} \def\Iso{\mbox{\rm Iso}\,}\def\Ind{\mbox{\rm Ind}\,}
\def\lr#1{\langle #1\rangle}    
\def\End{\mbox{\rm End}\,}
\def\Dim{\mbox{\rm \textbf{dim}}\,}\def\A{\mathcal{A}\,} \def\H{\mathcal{H}\,}
\def\I{\mathcal{I}\,}
\def \I{\mathbb{I}}
\def \bv{\mathbf{v}}
\theoremstyle{plain} %text of this environment is typesetted in italics
\newtheorem{theorem}{\bf Theorem}[section]
\newtheorem{lemma}[theorem]{\bf Lemma}
\newtheorem{corollary}[theorem]{\bf Corollary}
\newtheorem{proposition}[theorem]{\bf Proposition}
\theoremstyle{definition} %text of this environment is typesetted in roman letters
\newtheorem{definition}[theorem]{\bf Definition}
\newtheorem{remark}[theorem]{\bf Remark}
\newtheorem{example}[theorem]{\bf Example}
\newcommand{\bt}{\begin{theorem}}
\newcommand{\et}{\end{theorem}}
\newcommand{\bl}{\begin{lemma}}
\newcommand{\el}{\end{lemma}}
\newcommand{\bd}{\begin{definition}}
\newcommand{\ed}{\end{definition}}
\newcommand{\bc}{\begin{corollary}}
\newcommand{\ec}{\end{corollary}}
\newcommand{\bp}{\begin{proof}}
\newcommand{\ep}{\end{proof}}
\newcommand{\bx}{\begin{example}}
\newcommand{\ex}{\end{example}}
\newcommand{\br}{\begin{remark}}
\newcommand{\er}{\end{remark}}
\newcommand{\be}{\begin{equation}}
\newcommand{\ee}{\end{equation}}
\newcommand{\ba}{\begin{align}}
\newcommand{\ea}{\end{align}}
\newcommand{\bn}{\begin{enumerate}}
\newcommand{\en}{\end{enumerate}}
\newcommand{\bcs}{\begin{cases}}
\newcommand{\ecs}{\end{cases}}
\renewcommand{\section}{\@startsection{section}{1}{0mm}
  {-\baselineskip}{0.5\baselineskip}{\bf\leftline}}
\begin{document}

\title[From quantum groups to quantum cluster algebras]{From quantum groups to quantum cluster algebras} %title of paper and the running head option

%\author[Haicheng Zhang]{{Haicheng Zhang}} %first author's name and the running head option
\author{Changjian Fu and Haicheng Zhang}
\address{Department of Mathematics\\ SiChuan University\\ Chengdu 610064, P.~R.~China}
\email{changjianfu@scu.edu.cn (C. Fu)}
\address{Ministry of Education Key Laboratory of NSLSCS, School of Mathematical Sciences, Nanjing Normal University, Nanjing 210023, P.R.China}
\email{zhanghc@njnu.edu.cn (H. Zhang)}
%\dedicatory{Dedicated to Professor Xxx Yyy on his sixtieth birthday}

%%%%%%%%%%%%%%% footnote %%%%%%%%%%%%%%%%
\subjclass[2010]{ %2010 MSC numbers
17B37, 16G20, 17B20.
}
%In case \subjclass[2010] command is not effective
%(or the version of amsart.cls is old), write as follows:
%\renewcommand{\thefootnote}{\fnsymbol{footnote}}
%\footnote[0]{2010\textit{ Mathematics Subject Classification}.
%Primary 00; Secondary 00.}
%
\keywords{ %key words and phrases
Quantum group; Hall algebra; Quantum cluster algebra.
}
%\thanks{$*$~Corresponding author.}
%%%%%%%%%%%% Authors addresses %%%%%%%%%%%%%

%%%%%%%%%%%%%%%%%%%%%%%%%%%%%%%%%%%%%%%%%

\begin{abstract}
We provide a homomorphism of algebras from the quantum group $\mathbf{U}^+_v(\mathfrak{g})$ to the corresponding quantum cluster algebra $\A_q$ with principal coefficients. As a by-product, we show that the quantum cluster variables arising from
one-step mutations from the initial cluster variables satisfy the (high order) quantum Serre relations in $\A_q$.
\end{abstract}

\maketitle

\section{Introduction}
The Ringel--Hall algebra has established a bridge between representation theory of algebras and Lie theory. The Ringel--Hall algebra of a finite-dimensional hereditary algebra
provides a realization of the half part of the corresponding quantum group (cf.  \cite{R90,R90a,Gr95}).

The cluster algebra was invented by Fomin and Zelevinsky \cite{FZ} with the aim to study the total positivity in algebraic groups and canonical bases of quantum groups.
The connections between the cluster algebra and representation theory are explicitly characterized by cluster characters, which were defined by Caldero and Chapoton in \cite{CC}. The Caldero--Keller cluster multiplication theorem of cluster characters in \cite{CK2005,CK2} revealed the similarity between the
multiplications in cluster algebras and those in dual Hall algebras.

As the quantization of cluster algebras, quantum cluster algebras were later introduced by Berenstein and Zelevinsky in \cite{BZ05}. Berenstein and Rupel \cite{BR} introduced a generalized quantum cluster character, which assigns to each object $V$ of a hereditary abelian category $\mathcal {C}$ and any sequence ${\bf i}$ of simple objects in $\mathcal {C}$ the element $X_{V,{\bf i}}$ of a certain quantum polynomial algebra $\mathcal{P}_{\mathcal {C},{\bf i}}$. They defined an algebra homomorphism from the dual Ringel--Hall algebra of $\mathcal {C}$ to $\mathcal{P}_{\mathcal {C},{\bf i}}$, which generalizes the well-known Feigin homomorphism from the half part of the corresponding quantum group to $\mathcal{P}_{\mathcal {C},{\bf i}}$. The quantum versions of cluster characters defined by Caldero and Chapoton, the so-called quantum cluster characters were introduced by Rupel \cite{Rupel1}. Recently, by using the integration homomorphism on the Hall algebra of the morphism category of projective modules over a hereditary algebra, as well as its bialgebra structure, Fu, Peng and Zhang~\cite{FPZ} provided an intrinsic realization of quantum cluster characters via Hall algebra approach. Subsequently, Chen, Ding and Zhang \cite{CDZ} developed the approach used in~\cite{FPZ} and obtained the quantum version of Caldero--Keller's cluster multiplication theorem.

Although there have been homomorphisms of algebras from dual Hall algebras of hereditary abelian categories to the corresponding quantum cluster algebras (cf. \cite{BR,CDX,DXZ,FPZ}), these Hall algebras are all needed to be twisted by using the Euler form and the skew-symmetric bilinear form $\Lambda$ arising from a compatible pair.

Recently, Huang, Chen, Ding and Xu \cite{HCDX25} proved that, for an arbitrary quantum cluster algebra $\A_q$ with principal coefficients, the quantum cluster variables arising from one-step mutations from the initial cluster variables satisfy the so-called (high order) fundamental relations. Their approach is to do direct, elementary, but complicated calculations in $\A_q$.

In this paper, let $Q$ be a finite valued quiver without loops or two-cycles, and $\A$ be the category of finite-dimensional nilpotent representations of $Q$ over a finite field. Firstly, using the algebra homomorphism in \cite{CDX} for the compatible pair considered in \cite{HCDX25}, we recover the (high order) fundamental relations given in \cite{HCDX25}. Then we give another compatible pair $(\Lambda, \widetilde{B})$ associated to an acyclic quiver $Q$. Also, using the algebra homomorphism in \cite{CDX} for this special $\Lambda$, we obtain a homomorphism of algebras from the Ringel--Hall algebra $\mathcal {H}_v(\A)$ to the quantum cluster algebra $\A_q(Q)$ associated to $(\Lambda, \widetilde{B})$. Thus, we obtain the (high order) quantum Serre relations in the quantum cluster algebra $\A_q(Q)$.

Let us fix some notations used throughout the paper. For a finite set $S$, we denote by $|S|$ its cardinality. Let $k=\mathbb{F}_q$ be a finite field with $q$ elements, and set $v=\sqrt{q}$. Let $\A$ be a finitary hereditary abelian $k$-category, and let $\Iso(\mathcal {A})$ a complete set of objects in $\mathcal {A}$, $\Ind(\mathcal {A})$ be a complete set of indecomposable objects in $\mathcal {A}$, up to isomorphisms. The Grothendieck group of $\mathcal {A}$ is denoted by $K(\mathcal {A})$. Assume that all the vectors are column vectors, and denote by $X^{\rm tr}$ the transpose of a matrix $X$. For each finite-dimensional module $M$, we will always use the
corresponding lowercase boldface letter ${\bf m}$ to denote its dimension vector.

\section{Preliminaries}
\subsection{Quantum groups}
Let $\I=\{1,2,\dots,n\}$ be the index set and $C=(c_{ij})_{i,j \in \I}$ be a symmetrizable generalized Cartan matrix with a symmetrizer $D={\rm diag}(d_i\mid i\in\I)$, where all $d_i$ are chosen to be positive integers. That is, $DC$ is symmetric. Denote by $\mathfrak{g}$ the Kac--Moody Lie algebra corresponding to $C$.
Given two elements $a,b$ in an algebra, we write $[a, b]=ab-ba$. Let ${\bf v}$ be an indeterminate and $\mathbb{Q}({\bf v})$ be the rational function field.  Set ${\bf v}_i={\bf v}^{d_i}$ for each $i\in\I$. For any positive integer $a$, define
$[a] =\frac{{\bf v}^a-{\bf v}^{-a}}{{\bf v}-{\bf v}^{-1}}.$
For any positive integer $m$,  define $[m]^! =\prod\limits_{i=1}^m [i]$ and write $[0]^!=1$.
For any positive integers $d\leq m$, define
\begin{align*}
\qbinom{m}{d}  &=
\begin{cases}
\frac{[m][m-1]\ldots [m-d+1]}{[d]^!}, & \text{ if }d > 0,
\\
1,& \text{ if }d=0.
\end{cases}
\end{align*}
If $c$ is an integral power of ${\bf v}$, we write $\qbinom{m}{d}_c$ for the evaluation of $\qbinom{m}{d}$ at ${\bf v}=c$.

The \emph{quantum group} $\mathbf{U}_\bv(\mathfrak{g})$ is defined to be the $\mathbb{Q}(\bv^{\frac{1}{2}})$-algebra generated by $E_i,F_i, K_i,K_i^{-1}$, $i\in \I$,
subject to the following relations {for $i,j\in\I$}:
\begin{align}
&K_iK_i^{-1}=K_i^{-1}K_i=1,~\quad  [K_i,K_j]=0,\\
&K_i E_j=\bv_i^{c_{ij}} E_j K_i,\quad\qquad K_i F_j=\bv_i^{-c_{ij}} F_j K_i,\\
&[E_i,F_i]=\frac{K_i-K_i^{-1}}{\bv_i-\bv_i^{-1}},~~~~~~\quad[E_i,F_j]=0~~\text{for}~~i\neq j
\end{align}
and the {\em quantum Serre relations}
\begin{align}
&\sum_{t=0}^{1-c_{ij}} (-1)^t \qbinom{1-c_{ij}}{t}_{{\bf v}_i} E_i^{1-c_{ij}-t} E_j  E_i^{t}=0,\quad i\neq j\\
&\sum_{t=0}^{1-c_{ij}} (-1)^t \qbinom{1-c_{ij}}{t}_{{\bf v}_i} F_i^{1-c_{ij}-t} F_j  F_i^{t}=0,\quad i\neq j.
\end{align}

\subsection{Hall algebras}
Given objects $L,M,N \in \mathcal{A}$, let $\Ext_\mathcal{A}^1(M,N)_L \subset \Ext_\mathcal{A}^1(M,N)$ be the subset consisting of those equivalence classes of short exact sequences with middle term isomorphic to $L$.
\begin{definition}\label{Hall algebra of abelian category}
The \emph{Hall algebra} $\mathcal {H}(\mathcal{A})$ of $\mathcal{A}$ is the $\mathbb{Q}(v^{\frac{1}{2}})$-space with basis elements $\{u_M~|~ M\in \Iso(\mathcal{A})\}$, and with the multiplication defined by
\[u_M \diamond u_N = \sum\limits_{L \in {\rm Iso}(\mathcal{A})} {\frac{{|\Ext_\mathcal{A}^1{{(M,N)}_L}|}}{{|\Hom_\mathcal{A}(M,N)|}}} u_L.\]
\end{definition}

For any objects $M,N \in \mathcal{A}$, define \begin{equation}\label{Euler form}\lr{M,N}:=\dim_k\Hom_{\A}(M,N)-\dim_k\Ext^1_{\A}(M,N),\end{equation}
which descends to give a bilinear form
\begin{equation}\label{adeuler}\lr{\cdot ,\cdot }: K(\mathcal{A})\times K(\mathcal{A})\longrightarrow \mathbb{Z},\end{equation}
called the \emph{Euler form} of $\mathcal{A}$.
We also consider the \emph{symmetric Euler form}
$$(\cdot ,\cdot ): K(\mathcal{A})\times K(\mathcal{A})\longrightarrow \mathbb{Z},$$ defined by $(\alpha,\beta)=\lr{\alpha,\beta}+\lr{\beta,\alpha}$ for any $\alpha,\beta \in K(\mathcal{A})$.

The $v$-twisted Hall algebra $\mathcal {H}_{v}(\A)$, called the \emph{Ringel--Hall algebra} of $\A$, is the same space as $\mathcal {H}(\A)$, but with the twisted multiplication defined by
$$u_M*u_N=v^{\lr{M,N}}u_M\diamond u_N.$$

Let $Q$ be a valued quiver (cf. \cite{Rupel1,Rupel2}) without loops and its vertex set be $\{1,2,\cdots,n\}$. For each vertex $i$, let $d_i\in\mathbb{N^+}$ be the corresponding valuation. Let $\A$ be the category of finite-dimensional nilpotent $k$-representations of $Q$. Set  $\mathcal {D}_i=\End_{{\A}}(S_i)$ for each $1\leq i\leq n$, where $S_i$ is the simple representation corresponding to the vertex $i$.
For any $1\leq i,j\leq n$, set
$$c'_{ij}=\dim_{\mathcal {D}_i}\Ext_{{\A}}^1(S_j,S_i),\quad\quad c''_{ij}=\dim_{{\mathcal {D}_i}^{op}}\Ext_{{\A}}^1(S_i,S_j),$$
and define
\[c_{ij}=\begin{cases}
2 & \text{if $i=j$;}\\
-c'_{ij}-c''_{ij} & \text{if $i\neq j$.}
\end{cases}\]
Then the matrix $C=(c_{ij})$ is a
symmetrizable generalized Cartan matrix with symmetrizer $D={\rm diag}(d_1,\cdots,d_n)$. Denote by $\mathbf{U}_v(\mathfrak{g})$ the quantum group corresponding to $C$,  specialised at $\bv=v$. Let $\mathbf{U}^+_v(\mathfrak{g})$ be the positive part of $\mathbf{U}_v(\mathfrak{g})$ generated by all $E_i$.

In what follows, for each $1\leq i\leq n$, we may write $v_i$ and $q_i$ for $v^{d_i}$ and $q^{d_i}$, respectively.
\begin{theorem}\label{ringel}~\textup{(Ringel, Green)}
There exists an injective homomorphism of algebras $\lambda:\mathbf{U}^+_v(\mathfrak{g})\rightarrow \mathcal {H}_{v}(\A)$ defined on generators by
$\lambda(E_i)=(q_i-1)^{-1}u_{S_i}$. In particular, we have the quantum Serre relations
$$\sum_{t=0}^{1-c_{ij}} (-1)^t \qbinom{1-c_{ij}}{t}_{{v}_i} u_{S_i}^{1-c_{ij}-t} u_{S_j}  u_{S_i}^{t}=0,\quad 1\leq i\neq j\leq n$$
in the Ringel--Hall algebra $\mathcal {H}_{v}(\A)$.
\end{theorem}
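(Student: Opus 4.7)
The plan is to define $\lambda$ on generators by $\lambda(E_i)=(q_i-1)^{-1}u_{S_i}$, verify that these images satisfy the defining quantum Serre relations of $\mathbf{U}^+_v(\mathfrak{g})$ so that $\lambda$ extends uniquely to a $\mathbb{Q}(v^{1/2})$-algebra map, and then establish injectivity via a non-degeneracy argument. Note that the concluding Serre identities in $\mathcal{H}_v(\mathcal{A})$ asserted by the theorem become automatic from well-definedness.

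For the Serre check, observe that every monomial $u_{S_i}^{*a}*u_{S_j}*u_{S_i}^{*b}$ is a linear combination of basis elements $u_L$ whose composition factors lie in $\{S_i,S_j\}$, so the identity can be tested inside the full exact subcategory $\mathcal{A}_{ij}$ of $\mathcal{A}$ generated by $S_i$ and $S_j$, which is the category of representations of a rank-two valued quiver. Since $\mathrm{Ext}^1_{\mathcal{A}}(S_i,S_i)=0$, the twisted $m$-fold product $u_{S_i}^{*m}$ equals $v_i^{m(m-1)/2}[m]_{v_i}^{!}\, u_{mS_i}$, which motivates introducing divided powers $u_{S_i}^{(m)}=u_{S_i}^{*m}/[m]_{v_i}^{!}$; expanded in this normalization, the Serre sum reduces to a sum indexed by the finitely many isomorphism classes $L$ of dimension vector $(1-c_{ij})\mathbf{s}_i+\mathbf{s}_j$, whose coefficients are explicit Hall numbers that one matches with a standard Gaussian binomial identity of the form
\begin{equation*}
\sum_{t=0}^{m} (-1)^t v^{t(m-1-2k)}\qbinom{m}{t}_{v} = 0\qquad (0\leq k<m).
\end{equation*}
A more conceptual route, due to Green, is to equip $\mathcal{H}_v(\mathcal{A})$ with the twisted Hopf coproduct $\Delta$ governed by Green's formula and to verify that the Serre expression is $\Delta$-primitive; a weight/degree count then forces it to vanish in weight $(1-c_{ij})\mathbf{s}_i+\mathbf{s}_j$, since no non-zero primitive element of that weight exists in $\mathcal{A}_{ij}$.

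For injectivity, the plan is to invoke Green's non-degenerate Hopf pairing on the composition subalgebra of $\mathcal{H}_v(\mathcal{A})$ generated by the $u_{S_i}$, together with the non-degenerate Kac--Drinfeld pairing on $\mathbf{U}^+_v(\mathfrak{g})$; one checks on generators that $\lambda$ intertwines the two pairings up to a uniform scalar, so $\ker\lambda$ is contained in the radical of a non-degenerate form and must vanish. Equivalently, a PBW-type basis comparison in each $K(\mathcal{A})$-graded component (Lusztig's integral basis on one side, a basis coming from Kostant partitions of dimension vectors on the other) yields matching graded dimensions.

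The hard part is the Serre verification itself when the valuation is non-trivial and $-c_{ij}$ is large: the rank-two category $\mathcal{A}_{ij}$ need not be of finite representation type, so the direct enumeration of $L$ and the explicit computation of the Hall numbers $F^L_{(1-c_{ij}-t)S_i,\,S_j,\,tS_i}$ quickly becomes intractable. This is why the coalgebra/primitivity route is preferable in practice: it reduces the identity to Green's formula applied to products of simples, which behaves uniformly regardless of the representation type of $\mathcal{A}_{ij}$, and the $(q_i-1)^{-1}$ normalization in the definition of $\lambda$ is exactly what is needed to match Ringel's classical rank-one computation $u_{S_i}^{*m}=v_i^{m(m-1)/2}(q_i-1)^m[m]_{v_i}^{!}\,u_{mS_i}$ with the divided-power identities in $\mathbf{U}^+_v(\mathfrak{g})$.
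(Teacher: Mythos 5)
The paper itself gives no proof of this statement: it is quoted as a classical theorem with the attribution ``(Ringel, Green)'' and the references \cite{R90,R90a,Gr95}, so your attempt can only be measured against the standard argument in the literature. Your overall architecture --- reduce everything to the rank-two subcategory on $S_i,S_j$, verify the Serre relations there to get well-definedness, then prove injectivity by intertwining Green's Hopf pairing with the non-degenerate form on $\mathbf{U}^+_v(\mathfrak{g})$ --- is the correct one, and the injectivity half is essentially Green's proof. But the Serre verification, which you rightly single out as the hard part, is exactly where your argument has gaps. The shortcut ``the Serre expression is $\Delta$-primitive and there is no non-zero primitive element of weight $(1-c_{ij})\mathbf{s}_i+\mathbf{s}_j$ in $\mathcal{A}_{ij}$'' is asserted without proof; for a wild rank-two valued quiver the Hall algebra is strictly larger than its composition subalgebra, and determining its primitive (cuspidal) elements in a prescribed weight is not easier than --- indeed is essentially equivalent to --- the relation you are trying to prove. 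The way Green's machinery actually closes this gap is through the pairing, not through primitivity alone: one starts from the \emph{free} algebra on the $E_i$, checks that the induced bialgebra map to $\mathcal{H}_v(\mathcal{A})$ matches Lusztig's form with Green's form, observes that Green's form is diagonal in the basis $\{u_M\}$ with positive values at $v=\sqrt{q}$ (hence anisotropic on each graded piece), and concludes that the kernel of the map equals the radical of Lusztig's form, which Lusztig identified with the ideal generated by the Serre elements. This yields the Serre relations and injectivity in one stroke, and it also repairs a logical defect in your injectivity step: one cannot ``check on generators'' that a map defined on $\mathbf{U}^+_v(\mathfrak{g})$ intertwines pairings before knowing that the map is well defined.

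Two further concrete problems. First, your divided-power formulas are wrong for this paper's conventions and are mutually inconsistent: with structure constants $|\Ext^1_{\mathcal{A}}(M,N)_L|/|\Hom_{\mathcal{A}}(M,N)|$ one computes $u_{S_i}^{*m}=v_i^{-m(m-1)/2}\,u_{mS_i}$, not $v_i^{m(m-1)/2}[m]^!_{v_i}u_{mS_i}$ (your middle paragraph) nor $v_i^{m(m-1)/2}(q_i-1)^m[m]^!_{v_i}u_{mS_i}$ (your last paragraph); the latter formulas belong to the classical normalization by filtration counts $g^L_{MN}$, and the factor $(q_i-1)^{-1}$ in $\lambda(E_i)$ is in any case irrelevant to the Serre relations, which are homogeneous --- it is there to match Green's comultiplication and pairing conventions. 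Second, the direct route is only gestured at: Ringel's original proof does carry out the rank-two Hall-number computation you call intractable (the relevant isomorphism classes in dimension $(1-c_{ij})\mathbf{s}_i+\mathbf{s}_j$ can be enumerated and the resulting coefficients matched against the $q$-binomial identity you quote), but some such analysis, or the pairing argument above, must actually be supplied for the proof to be complete.
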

In fact, we also have the following so-called {\em high order quantum Serre relations} in the Ringel--Hall algebra $\mathcal {H}_{v}(\A)$ (cf. \cite[Ch. 7]{Lus}, \cite{Deng}).
\begin{theorem}\label{high}
Let $1\leq i\neq j\leq n$. For any positive integers $l,p$ with $p\geq-lc_{ij}$, and $\varepsilon=\pm1$, we have
\begin{equation}\label{gjsgx}
\sum_{t=0}^{p+1} (-1)^t v_i^{\varepsilon (p+lc_{ij})t}\qbinom{p+1}{t}_{{v}_i} u_{S_i}^{p+1-t} u_{S_j}^l  u_{S_i}^{t}=0
\end{equation}
in the Ringel--Hall algebra $\mathcal {H}_{v}(\A)$.
\end{theorem}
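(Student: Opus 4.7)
The plan is to use the Ringel--Green embedding $\lambda : \mathbf{U}^+_v(\mathfrak{g}) \hookrightarrow \mathcal{H}_v(\A)$ of Theorem~\ref{ringel} to transport the identity back to the positive part of the quantum group, where it becomes the classical \emph{high order quantum Serre relation} from the cited references. Since $\lambda(E_i)=(q_i-1)^{-1}u_{S_i}$, expansion gives
\[
u_{S_i}^{p+1-t}\,u_{S_j}^l\,u_{S_i}^t \;=\; (q_i-1)^{p+1}(q_j-1)^l\,\lambda\bigl(E_i^{p+1-t}E_j^l E_i^t\bigr),
\]
so the desired identity \eqref{gjsgx} is, up to the nonzero common scalar $(q_i-1)^{p+1}(q_j-1)^l$, the image under $\lambda$ of
\[
\sum_{t=0}^{p+1}(-1)^t v_i^{\varepsilon(p+lc_{ij})t}\qbinom{p+1}{t}_{v_i}E_i^{p+1-t}E_j^l E_i^t.
\]
By injectivity of $\lambda$, it suffices to prove that this expression vanishes in $\mathbf{U}_v^+(\mathfrak{g})$.

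Switching to divided powers $E_i^{(n)}:=E_i^n/[n]^!_{v_i}$, the claim becomes
\[
\sum_{t=0}^{p+1}(-1)^t v_i^{\varepsilon(p+lc_{ij})t}\,E_i^{(p+1-t)}E_j^l E_i^{(t)}=0.
\]
I would prove this by a double induction: on $l$, with base case $l=1$ supplied by the ordinary quantum Serre relation of Theorem~\ref{ringel}, and on $p$, starting from $p=-lc_{ij}$ where the weight $v_i^{\varepsilon(p+lc_{ij})t}$ is trivial and the identity reduces to an ``$l$-fold'' Serre relation already handled by the $l$-induction. The inductive step rests on the $v_i$-Pascal identity
\[
\qbinom{p+1}{t}_{v_i}=v_i^t\qbinom{p}{t}_{v_i}+v_i^{-(p+1-t)}\qbinom{p}{t-1}_{v_i}
\]
together with a single ``$q$-commutation'' of an extra $E_i$ past $E_j^l$, which can be effected by applying Lusztig's skew-derivation $r_i$ to the previous step of the induction; the divided-power reformulation and the full induction are treated in \cite[Ch.~7]{Lus}, and a direct combinatorial version is carried out in~\cite{Deng}.

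The principal obstacle is the bookkeeping of the exponent $\varepsilon(p+lc_{ij})t$, which must be tracked for both choices $\varepsilon=\pm1$ simultaneously; fortunately these two cases are interchanged by the bar involution $E_i\mapsto E_i$, $v\mapsto v^{-1}$, under which the Serre ideal is stable, so it suffices to verify one sign and the other follows automatically. Apart from this $v_i$-power bookkeeping, the argument uses only the ordinary Serre relation of Theorem~\ref{ringel} and standard $v_i$-binomial identities, which is why no further input is required in the Hall-algebra setting and the proof amounts to invoking the classical result through the embedding $\lambda$.
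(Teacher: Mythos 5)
Your proposal is correct and follows essentially the same route as the paper: Theorem \ref{high} is stated there without proof, with only a citation of \cite[Ch.~7]{Lus} and \cite{Deng}, and the intended argument is exactly your reduction --- rewrite the sum as $(q_i-1)^{p+1}(q_j-1)^l$ times the image under the Ringel--Green homomorphism $\lambda$ of Theorem \ref{ringel} (only the homomorphism property, not injectivity, is needed in this direction) and invoke the higher-order quantum Serre relations in $\mathbf{U}^+_{\mathbf v}(\mathfrak{g})$. Your supplementary induction sketch is consistent with how those references establish the relation, so no further input is required.
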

Note that if we take $l=1$ and $p=-c_{ij}$ in Theorem \ref{high}, the corresponding high order quantum Serre relations are just quantum Serre relations.

\subsection{Quantum cluster algebras}
In what follows, let $n\leq m$ be always two positive integers. For each integer $a$, set $[a]_{+}:=max\{0,a\}$.
Let $\Lambda$ be an $m\times m$ skew-symmetric matrix {such that $2\Lambda$ is an integral matrix}, and denote by $\{{\bf e}_1,\cdots,{\bf e}_m\}$ the standard basis of $\mathbb{Z}^{m}$. Let $\mathfrak{q}$ be an indeterminate.
Define the {\em quantum torus} associated to the pair
$(\mathbb{Z}^{m},\Lambda)$ to be the $\mathbb{Q}(\mathfrak{q}^{\frac{1}{4}})$-algebra $\mathcal{T}_{\mathfrak{q},\Lambda}$ with
a distinguished basis $\{X^{\bf e}: {\bf e}\in \mathbb{Z}^{m}\}$ and
multiplication given by
\[X^{\bf e}X^{\bf f}=\mathfrak{q}^{\frac{1}{2}\Lambda({\bf e},{\bf f})}X^{{\bf e+f}},\]
where we still denote by $\Lambda$ the skew-symmetric bilinear form on $\mathbb{Z}^{m}$ associated to the skew-symmetric matrix $\Lambda$. It is well-known that $\mathcal{T}_{\mathfrak{q},\Lambda}$ is an Ore domain, and thus is contained in its
skew-field of fractions $\mathcal{F}_{\mathfrak{q},\Lambda}$.

An $n\times n$ integral matrix $B$ is called {\em skew-symmetrizable}, if there exists a
diagonal matrix $D={\rm diag}(d_1,\cdots,d_n)$ with each $d_i\in\mathbb{N^+}$, such that $DB$ is skew-symmetric. Let $\widetilde{B}=(b_{ij})$ be an $m\times n$ integral matrix with the upper
$n\times n$ submatrix $B$ being skew-symmetrizable, and $\Lambda$ is as above, then the pair
$(\Lambda, \widetilde{B})$ is called {\em compatible} if $\widetilde{B}^{\rm tr}\Lambda=(D~|~0)$ for some diagonal matrix
$D={\rm diag}(d_1,\cdots,d_n)$ with each $d_i\in\mathbb{N^+}$. An {\em initial  quantum seed} for $\mathcal{F}_{\mathfrak{q},\Lambda}$ is a triple
$(\Lambda, \widetilde{B}, {\bf \widetilde{x}})$  consisting of a compatible pair $(\Lambda,\widetilde{B})$ and the set  ${\bf \widetilde{x}}=\{x_1,\cdots,x_m\}$, where each $x_i$ denotes $X^{{\bf e}_i}$. For any $1\leq k\leq n$, we  define the mutation of $(\Lambda, \widetilde{B}, {\bf \widetilde{x}})$ in the direction $k$ to obtain the new quantum seed $(\Lambda',\widetilde{B}',{\bf \widetilde{x}}')$ as follows:

(1)\ $\Lambda'=H^{\rm tr}\Lambda H$, where the
$m\times m$ matrix $H=(h_{ij})$ is given by
\[h_{ij}=\begin{cases}
\delta_{ij} & \text{if $j\ne k$;}\\
-1 & \text{if $i=j=k$;}\\
[-b_{ik}]_+ & \text{if $i\ne j = k$.}
\end{cases}
\]

(2)\ $\widetilde{B}'=(b'_{ij})$ is given by
\[b'_{ij}=\begin{cases}
-b_{ij} & \text{if $i=k$ or $j=k$;}\\
b_{ij}+\frac{1}{2}(|b_{ik}|b_{kj}+b_{ik}|b_{kj}|) & \text{otherwise.}
\end{cases}
\]

(3)\  ${\bf \widetilde{x}}'=\{x'_1,\cdots,x'_m\}$ is given by
\begin{equation}\label{mutation}
x_i'=\begin{cases}
X^{\sum\limits_{i=1}^m[b_{ik}]_{+} {\bf e}_i -{\bf e}_k}+X^{\sum\limits_{i=1}^m[-b_{ik}]_{+} {\bf e}_i -{\bf e}_k} & \text{if $i= k$;}\\
x_i & \text{otherwise.}
\end{cases}
\end{equation}

Two quantum seeds  $(\Lambda, \widetilde{B}, {\bf \widetilde{x}})$ and  $(\Lambda', \widetilde{B}', {\bf \widetilde{x}}')$ are
called {\em mutation-equivalent}, denoted by $(\Lambda, \widetilde{B}, {\bf \widetilde{x}})\sim(\Lambda', \widetilde{B}', {\bf \widetilde{x}}')$, if they can be obtained from each other
by a sequence of mutations. Let $\mathcal{C}=\{x'_i~|~ (\Lambda', \widetilde{B}', {\bf \widetilde{x}}')\sim(\Lambda, \widetilde{B}, {\bf \widetilde{x}}),1\leq i\leq n\}$, and
the elements in $\mathcal{C}$ are called the {\em quantum cluster
variables}. Let $\mathbb{P}=\{x_i~|~n+1\leq i\leq m\}$, and the
elements in $\mathbb{P}$ are called the {\em frozen variables}. Denote by
$\mathbb{Q}\mathbb{P}$ the algebra of Laurent polynomials in the elements of $\mathbb{P}$ with coefficients in $\mathbb{Q}(\mathfrak{q}^{\frac{1}{4}})$. Then the
{\em quantum cluster algebra}
$\mathcal{A}_{\mathfrak{q}}(\Lambda,\widetilde{B})$ is defined to be the
$\mathbb{Q}\mathbb{P}$-subalgebra of $\mathcal{F}_{\mathfrak{q},\Lambda}$ generated by all
quantum cluster variables.

\section{Twisted high order quantum Serre relations in $\mathcal{T}_{q,\Lambda}$}
\subsection{Setting and Notation}\label{s3.1}
Let $\widetilde{Q}$ be a valued quiver without loops or two-cycles, and its vertex set be
$\{1,\ldots,m\}$. Let $Q$ be a full subquiver of $\widetilde{Q}$ with the vertex set $\{1,\ldots,n\}$. Denote by $\mathcal{A}$ and $\widetilde{\mathcal{A}}$ be the categories of finite-dimensional nilpotent $k$-representations of $Q$ and $\widetilde{Q}$, respectively. For each $1\leq i\leq m$, let $S_i$ be the simple representation of $\widetilde{Q}$ corresponding to the vertex $i$, and denote by $\mathcal{D}_i=\End(S_i)$ its endomorphism algebra. Set $D=\operatorname{diag}(d_1,\ldots,d_n)$,  where each $d_i=\dim_k\mathcal{D}_i$.

Let $\widetilde{R}$ and $\widetilde{R}'$ be the $m\times n$ matrices with the $i$-th row and $j$-th column elements given respectively by
\[
r_{ij}=\dim_{\mathcal{D}_i}\Ext^1_{\widetilde{\mathcal{A}}}(S_j,S_i)
\]
and
\[
r_{ij}'=\dim_{\mathcal{D}_i^{op}}\Ext^1_{\widetilde{\mathcal{A}}}(S_i,S_j),
\]
where $1\leq i\leq m$ and $1\leq j\leq n$. Set $\widetilde{E}'=\widetilde{I}-\widetilde{R}$ and $\widetilde{E}=\widetilde{I}-\widetilde{R}'$, where $\widetilde{I}$ is the left $m\times n$ submatrix of the $m\times m$ identity matrix $I_m$. Recall that $c_{ij}=-r_{ij}-r_{ij}'$ for any $1\leq i\neq j\leq n$.

Let $R,R'\in M_{n\times n}(\mathbb{Z})$ be the upper $n\times n$ sub-matrices of $\widetilde{R}$ and $\widetilde{R}'$, respectively.
Set $\widetilde{B}=\widetilde{R}'-\widetilde{R}$ and $B=R'-R$. Since $DR'=R^{\rm tr}D$,
$DB$ is skew-symmetric. Moreover, the matrix $E$ representing the Euler form of $\A$ under the standard basis is equal to $(I_n-R^{\rm tr})D=D(I_n-R')$.

In what follows, we assume that there is a compatible pair $(\Lambda, \widetilde{B})$ with
$\widetilde{B}^{\rm tr}\Lambda=(D,0)$.

\subsection{$\Lambda$-twisted Ringel--Hall algebras}
The {\em $\Lambda$-twisted Ringel--Hall algebra} $\mathcal {H}_{\Lambda}(\A)$ is the same space as the Ringel--Hall algebra $\mathcal {H}_v(\A)$, but with the twisted multiplication defined by
\begin{equation}\label{lmdc}u_M\star u_N=v^{\Lambda(\widetilde{E}{\bf m},\widetilde{E}{\bf n})+\lr{M,N}}u_M\ast u_N
=q^{\frac{1}{2}\Lambda(\widetilde{E}{\bf m},\widetilde{E}{\bf n})+\lr{M,N}}u_M\diamond u_N.\end{equation}

For any object $M\in\A$, set
$$X_M:=\sum\limits_{\mathbf{e}}v^{-\lr{\mathbf{e}, \mathbf{m}-\mathbf{e}}}|\mathrm{Gr}_{\mathbf{e}}M|
X^{-\widetilde{E}'\mathbf{e}-\widetilde{E}(\mathbf{m}-\mathbf{e})},$$
where $\mathrm{Gr}_{\bf{e}}M$ is the set of all submodules $V$ of $M$ with $\Dim V= \bf{e}$.

\begin{remark}
When the valued quiver $Q$ is acyclic, $X_?$ is known as the quantum Caldero--Chapoton map, which provides a representation-theoretic realization of quantum cluster variables. In particular, for every rigid representation $M$, $X_M$ lies in the quantum cluster algebra $\mathcal{A}_q(\Lambda,\widetilde{B})$, and in fact corresponds to a quantum cluster monomial. However, when $Q$ contains oriented cycles,
it is unclear whether $X_M$
lies in the quantum cluster algebra  for an arbitrary  rigid representation $M$ of $Q$.
\end{remark}

In what follows, for each $1\leq i\leq n$, we write $y_i$ for the quantum cluster variable $x'_i$ arising from
one-step mutation in the direction $i$ from the initial cluster ${\bf \widetilde{x}}$.
\begin{proposition}\label{sy2}
For each $1\leq i\leq n$, we have $y_i=X_{S_i}$.
\end{proposition}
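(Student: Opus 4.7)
The plan is to compute $X_{S_i}$ explicitly from its defining sum, exploiting the simplicity of $S_i$, and then to match the resulting expression term by term with the mutation formula \eqref{mutation} that defines $y_i = x_i'$. Since $S_i$ has exactly two submodules, namely $0$ and $S_i$ itself, each of the two Grassmannians $\mathrm{Gr}_{\mathbf{e}}S_i$ is a singleton and the Euler pairings $\langle \mathbf{e},\mathbf{s}_i-\mathbf{e}\rangle$ vanish. Therefore
\[
X_{S_i} = X^{-\widetilde{E}\mathbf{e}_i} + X^{-\widetilde{E}'\mathbf{e}_i}.
\]

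Next, using $\widetilde{E}=\widetilde{I}-\widetilde{R}'$ and $\widetilde{E}'=\widetilde{I}-\widetilde{R}$, I would rewrite the two exponents as $\widetilde{R}'\mathbf{e}_i-\mathbf{e}_i^{(m)}$ and $\widetilde{R}\mathbf{e}_i-\mathbf{e}_i^{(m)}$, respectively, where $\mathbf{e}_i^{(m)}$ denotes the $i$-th standard basis vector of $\mathbb{Z}^m$. Since the $j$-th entry of $\widetilde{R}'\mathbf{e}_i$ is $r'_{ji}$ and the $j$-th entry of $\widetilde{R}\mathbf{e}_i$ is $r_{ji}$, matching $X_{S_i}$ with
\[
y_i = X^{\sum_{j=1}^m [b_{ji}]_+\mathbf{e}_j-\mathbf{e}_i^{(m)}}+X^{\sum_{j=1}^m [-b_{ji}]_+\mathbf{e}_j-\mathbf{e}_i^{(m)}}
\]
reduces to the combinatorial identities $[b_{ji}]_+ = r'_{ji}$ and $[-b_{ji}]_+ = r_{ji}$ for every $j$.

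Because $b_{ji}=r'_{ji}-r_{ji}$ by the definition of $\widetilde{B}$ in Section~\ref{s3.1}, the identities above are equivalent to the statement that, for each pair $(i,j)$, at most one of $r_{ji},r'_{ji}$ is nonzero. This is the single nontrivial step, and it is precisely where the assumption that $\widetilde{Q}$ has no two-cycles is used: a nonzero $r_{ji}=\dim_{\mathcal{D}_j}\Ext^1_{\widetilde{\A}}(S_i,S_j)$ corresponds to the presence of arrows $i\to j$ in $\widetilde{Q}$, while a nonzero $r'_{ji}=\dim_{\mathcal{D}_j^{op}}\Ext^1_{\widetilde{\A}}(S_j,S_i)$ corresponds to arrows $j\to i$; if both were nonzero, the pair would form a $2$-cycle, contradicting the standing hypothesis on $\widetilde{Q}$. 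Once this dichotomy is noted, splitting into the three cases $b_{ji}>0$, $b_{ji}<0$, $b_{ji}=0$ yields the required identities immediately, and the proposition follows.

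I do not foresee any real obstacle beyond keeping track of two conventions: the vector $\mathbf{e}_i$ is used both as the $i$-th standard basis vector of $\mathbb{Z}^n$ (when appearing inside $\widetilde{R}\mathbf{e}_i$ or $\widetilde{R}'\mathbf{e}_i$) and of $\mathbb{Z}^m$ (inside the exponents of $X$); and the rectangular matrices $\widetilde{I},\widetilde{R},\widetilde{R}'$ must be applied consistently so that the outputs lie in $\mathbb{Z}^m$. Apart from this bookkeeping, the argument is a direct comparison and makes no use of the compatibility of $(\Lambda,\widetilde{B})$ beyond what is needed to set up $\mathcal{T}_{\mathfrak{q},\Lambda}$.
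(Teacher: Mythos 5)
Your proof is correct and follows essentially the same route as the paper's: compute $X_{S_i}=X^{-\widetilde{E}\mathbf{e}_i}+X^{-\widetilde{E}'\mathbf{e}_i}$ from the definition, translate the columns of $\widetilde{E}$ and $\widetilde{E}'$ into the entries $r'_{ki}$, $r_{ki}$, and identify these with $[b_{ki}]_+$ and $[-b_{ki}]_+$ to match the mutation formula. The only difference is that you make explicit the justification (absence of two-cycles in $\widetilde{Q}$ forces at most one of $r_{ki},r'_{ki}$ to be nonzero) that the paper leaves implicit when it asserts $r'_{kj}=[b_{kj}]_+$ and $r_{kj}=[-b_{kj}]_+$.
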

\begin{proof}
By definition, for each $1\leq i\leq n$, $X_{S_i}=X^{-\widetilde{E}{\bf e}_i}+X^{-\widetilde{E}'{\bf e}_i}$.
Since $b_{kj}=r'_{kj}-r_{kj}$ for any $1\leq k\leq m$ and $1\leq j\leq n$, we have $$r'_{kj}=[b_{kj}]_{+}\quad\text{and}\quad r_{kj}=[-b_{kj}]_{+}.$$
So the $i$-th column vector of $\widetilde{E}$ is equal to
\begin{flalign*}\widetilde{E}{\bf e}_i&=(-r'_{1i},\cdots,-r'_{i-1,i},1,-r'_{i+1,i},\cdots,-r'_{m,i})^{\rm tr}\\
&=(-[b_{1i}]_+,\cdots,-[b_{i-1,i}]_+,1,-[b_{i+1,i}]_+,\cdots,-[b_{m,i}]_+)^{\rm tr}.
\end{flalign*}
Hence, we obtain $X^{-\widetilde{E}{\bf e}_i}=X^{\sum\limits_{k=1}^{m}[b_{ki}]_+{\bf e}_k-{\bf e}_i}$. Similarly,
$X^{-\widetilde{E}'{\bf e}_i}=X^{\sum\limits_{k=1}^{m}[-b_{ki}]_+{\bf e}_k-{\bf e}_i}$.
Therefore, $$X_{S_i}=X^{\sum\limits_{k=1}^{m}[b_{ki}]_+{\bf e}_k-{\bf e}_i}+X^{\sum\limits_{k=1}^{m}[-b_{ki}]_+{\bf e}_k-{\bf e}_i}=y_i.$$
\end{proof}

The following homomorphism of algebras has been proved in \cite{CDX} for acyclic quivers, but it also holds for any valued quiver which may have oriented cycles (cf. also \cite{BR,FPZ}).
In fact, according to \cite{FPZ}, this homomorphism can also be obtained by using the comultiplication homomorphism and integration homomorphism on the Hall algebra of $\A$, which only depend on the heredity of $\A$.
\begin{theorem}\label{alg-hom-cdx1}
The map $\Psi:\mathcal {H}_{\Lambda}(\A)\longrightarrow\mathcal{T}_{q,\Lambda}$ defined on basis elements by $u_M\mapsto X_M$ is a homomorphism of algebras.
\end{theorem}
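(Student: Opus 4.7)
Since $\{u_M : M \in \Iso(\A)\}$ is a basis of $\H_{\Lambda}(\A)$, it suffices to verify $\Psi(u_M\star u_N) = X_M \cdot X_N$ for arbitrary $M,N$. The plan is to expand both sides in the monomial basis $\{X^{\mathbf{p}}\}_{\mathbf{p}\in\ZZ^m}$ of $\mathcal{T}_{q,\Lambda}$ and reduce the equality to a Riedtmann/Hubery-type subobject counting identity in $\A$, combined with a $q$-exponent bookkeeping that rests entirely on the compatibility $\widetilde{B}^{\mathrm{tr}}\Lambda=(D\mid 0)$ and the structural identities $\widetilde{B}=\widetilde{E}'-\widetilde{E}$ and $E=(I-R^{\mathrm{tr}})D=D(I-R')$ recorded in Section~\ref{s3.1}. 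Since none of these ingredients relies on acyclicity of $Q$, the argument works in the present generality of valued quivers possibly containing oriented cycles.

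Expanding $X_M\cdot X_N$ via the twist rule $X^{\mathbf{e}}X^{\mathbf{f}}=q^{\frac{1}{2}\Lambda(\mathbf{e},\mathbf{f})}X^{\mathbf{e}+\mathbf{f}}$ and the defining formula of $X_M$ produces a double sum indexed by sub-dimension vectors $\mathbf{e}\le\mathbf{m}$, $\mathbf{f}\le\mathbf{n}$, with coefficient $|\mathrm{Gr}_{\mathbf{e}}M|\,|\mathrm{Gr}_{\mathbf{f}}N|$ times an explicit $q$-power and the monomial $X^{-\widetilde{E}'(\mathbf{e}+\mathbf{f})-\widetilde{E}((\mathbf{m}+\mathbf{n})-(\mathbf{e}+\mathbf{f}))}$. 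On the other side,
\[
\Psi(u_M\star u_N)=\sum_{[L]\in\Iso(\A)}q^{\frac{1}{2}\Lambda(\widetilde{E}\mathbf{m},\widetilde{E}\mathbf{n})+\lr{M,N}}\,\frac{|\Ext_\A^1(M,N)_L|}{|\Hom_\A(M,N)|}\,X_L,
\]
and expanding each $X_L$ yields a further double sum over middle terms $L$ (with $\mathbf{l}=\mathbf{m}+\mathbf{n}$) and over sub-dimension vectors $\mathbf{g}\le\mathbf{l}$.

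To pass between the two forms I invoke the standard Riedtmann/Hubery counting identity in a finitary hereditary abelian category: for fixed $M,N$ and dimension vectors $\mathbf{e},\mathbf{f}$, the number of pairs of subobjects $(V\subseteq M, W\subseteq N)$ with $\mathbf{v}=\mathbf{e}$, $\mathbf{w}=\mathbf{f}$ is, up to an explicit $q$-twist, equal to a weighted sum over $[L]$ of $|\Ext_\A^1(M,N)_L|$ times the number of subobjects $U\subseteq L$ of dimension $\mathbf{e}+\mathbf{f}$ that intersect the canonical copy of $N$ in dimension $\mathbf{f}$. Applied termwise, this identity rewrites the $L$-indexed sum as an $(\mathbf{e},\mathbf{f})$-indexed sum, and by linearity of $\widetilde{E}$ and $\widetilde{E}'$ the resulting monomial index $X^{-\widetilde{E}'\mathbf{g}-\widetilde{E}(\mathbf{l}-\mathbf{g})}$ with $\mathbf{g}=\mathbf{e}+\mathbf{f}$ coincides term for term with the monomial index produced by $X_M X_N$.

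The main obstacle, and the only place where the compatibility of $(\Lambda,\widetilde{B})$ is essential, is the reconciliation of the $q$-exponents. On the product side the exponent reads $-\lr{\mathbf{e},\mathbf{m}-\mathbf{e}}-\lr{\mathbf{f},\mathbf{n}-\mathbf{f}}+\frac{1}{2}\Lambda(-\widetilde{E}'\mathbf{e}-\widetilde{E}(\mathbf{m}-\mathbf{e}),\,-\widetilde{E}'\mathbf{f}-\widetilde{E}(\mathbf{n}-\mathbf{f}))$, while on the Hall side it reads $\frac{1}{2}\Lambda(\widetilde{E}\mathbf{m},\widetilde{E}\mathbf{n})+\lr{M,N}-\lr{\mathbf{g},\mathbf{l}-\mathbf{g}}$ plus the twist contributed by the Riedtmann/Hubery identity. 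Expanding the $\Lambda$-pairing on the product side by bilinearity and replacing $\widetilde{E}'-\widetilde{E}$ with $\widetilde{B}$ converts every mixed term into a $D$-pairing of sub-dimension vectors via $\widetilde{B}^{\mathrm{tr}}\Lambda=(D\mid 0)$; the identities $E=D(I-R')=(I-R^{\mathrm{tr}})D$ then rewrite each such $D$-pairing as an Euler form of sub-dimension vectors in $\A$. I expect this linear-algebra matching of exponents to be the technically demanding step, whereas the combinatorial identity itself is standard and has been carried out in essentially this form in the acyclic setting in \cite{BR,CDX,FPZ}; since no step uses acyclicity, the same computation establishes the theorem here.
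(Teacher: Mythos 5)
The paper offers no proof of Theorem~\ref{alg-hom-cdx1} at all: it quotes the result from \cite{CDX}, where it is established for acyclic valued quivers by essentially the direct computation you outline, and justifies the extension to quivers with oriented cycles by appealing to \cite{FPZ}, where the homomorphism is obtained abstractly as a composition of the comultiplication and the integration map on the Hall algebra of $\A$ --- constructions that use only heredity. Your proposal is therefore a reconstruction of the route taken in the cited sources rather than of anything written here, and its two pillars are the right ones. The counting identity you invoke is the Green--Hubery lemma: for fixed $\mathbf{e},\mathbf{f}$, the sum over $[L]$ of $|\Ext^1_{\A}(M,N)_L|/|\Hom_{\A}(M,N)|$ times the number of $U\le L$ with $\Dim U=\mathbf{e}+\mathbf{f}$ and $\Dim(U\cap N)=\mathbf{f}$ equals $|\mathrm{Gr}_{\mathbf{e}}M|\,|\mathrm{Gr}_{\mathbf{f}}N|$ up to an explicit power of $q$ built from Euler forms of the sub-dimension vectors; its only non-formal input is the surjectivity of $\Ext^1_{\A}(M,N)\to\Ext^1_{\A}(V,N/W)$, which is heredity, not acyclicity, so your remark that the argument carries over verbatim is sound and is exactly the paper's implicit claim. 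The exponent matching then rests on $\Lambda(\widetilde{B}\alpha,\widetilde{E}\beta)=\lr{\alpha,\beta}$, which follows from $\widetilde{B}^{\rm tr}\Lambda=(D\mid 0)$ together with $(D\mid 0)\widetilde{E}=D(I_n-R')=E$, as you indicate. The caveat is that you have deferred precisely this bookkeeping, which is where all the work lies: one must verify that the $q$-power produced by the counting identity is absorbed exactly by the difference between $\tfrac{1}{2}\Lambda(\widetilde{E}\mathbf{m},\widetilde{E}\mathbf{n})+\lr{M,N}$ and the $\Lambda$-pairing of the exponent vectors of $X_M$ and $X_N$. As written this is a viable plan rather than a complete proof; the paper sidesteps the verification entirely by citing the bialgebra argument of \cite{FPZ}, which is the cleaner way to see that acyclicity plays no role.
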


\subsection{Twisted high order quantum Serre relations}
Let $1\leq i\neq j\leq n$, for any positive integers $p,l$ with $p\geq-lc_{ij}$, and $0\leq t\leq p+1$, set
\begin{flalign*}
s_{i,j;t}^{p,l}&:=\frac{(p+1-t)(p-t)}{2}d_i+\frac{l(l-1)}{2}d_j+\frac{t(t-1)}{2}d_i+(p+1-t)l\Lambda(\widetilde{E}{\bf e}_i,\widetilde{E}{\bf e}_j)\\
&+(p+1-t)l\lr{{\bf e}_i,{\bf e}_j}+lt\Lambda(\widetilde{E}{\bf e}_j,\widetilde{E}{\bf e}_i)+(p+1-t)td_i+lt\lr{{\bf e}_j,{\bf e}_i}\\
&=\frac{p(p+1)}{2}d_i+\frac{l(l-1)}{2}d_j+(p+1-t)l\lr{{\bf e}_i,{\bf e}_j}+lt\lr{{\bf e}_j,{\bf e}_i}+(p+1-2t)l\Lambda(\widetilde{E}{\bf e}_i,\widetilde{E}{\bf e}_j)
\end{flalign*}
and $$a_{i,j;l,t}:=(\lr{{\bf e}_j,{\bf e}_i}-\lr{{\bf e}_i,{\bf e}_j}-2\Lambda(\widetilde{E}{\bf e}_i,\widetilde{E}{\bf e}_j))lt.$$

\begin{proposition}\label{eq:twist-q-serre-rel-qca}
Let $1\leq i\neq j\leq n$. For any positive integers $l,p$ with $p\geq-lc_{ij}$, and $\varepsilon=\pm1$, we have the twisted high order quantum Serre relations
$$\sum_{t=0}^{p+1} (-1)^t v_i^{\varepsilon (p+lc_{ij})t}v^{-a_{i,j;l,t}}\qbinom{p+1}{t}_{{v}_i} y_i^{p+1-t} y_j^l  y_i^{t}=0$$
in the quantum torus $\mathcal{T}_{q,\Lambda}$.
\end{proposition}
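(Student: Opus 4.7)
The plan is to pull back the high-order quantum Serre relations of Theorem~\ref{high} (which hold in the $\ast$-twisted Ringel--Hall algebra $\mathcal{H}_v(\mathcal{A})$) to $\mathcal{T}_{q,\Lambda}$ through the algebra homomorphism $\Psi\colon \mathcal{H}_\Lambda(\mathcal{A}) \to \mathcal{T}_{q,\Lambda}$ of Theorem~\ref{alg-hom-cdx1}, noting that $\Psi(u_{S_i}) = y_i$ by Proposition~\ref{sy2}. Since $\Psi$ is an algebra map with respect to $\star$, while Theorem~\ref{high} is stated with respect to $\ast$, the main task is to rewrite each monomial $u_{S_i}^{\ast(p+1-t)} \ast u_{S_j}^{\ast l} \ast u_{S_i}^{\ast t}$ as a scalar multiple of $u_{S_i}^{\star(p+1-t)} \star u_{S_j}^{\star l} \star u_{S_i}^{\star t}$.

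The rewriting uses the identity $x \star y = v^{\Lambda(\widetilde{E}\alpha,\widetilde{E}\beta) + \langle \alpha,\beta\rangle}\, x \ast y$, valid for elements $x, y$ homogeneous of dimension-vector degrees $\alpha, \beta$, applied iteratively. Within one species, the skew-symmetry of $\Lambda$ together with $\langle S_i, S_i\rangle = d_i$ yields $u_{S_i}^{\star k} = v^{\binom{k}{2}d_i}\, u_{S_i}^{\ast k}$, and similarly for $S_j$; combining these with two further mixed-species twists, all contributions assemble into precisely the number $s_{i,j;t}^{p,l}$ defined just before the proposition, whose first displayed expression records them term by term. Applying $\Psi$ to the identity of Theorem~\ref{high} then produces
\[
\sum_{t=0}^{p+1} (-1)^t v_i^{\varepsilon(p+lc_{ij})t}\, v^{-s_{i,j;t}^{p,l}}\, \qbinom{p+1}{t}_{v_i}\, y_i^{p+1-t} y_j^l y_i^t = 0.
\]

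To finish, the second displayed expression for $s_{i,j;t}^{p,l}$ (obtained from the first by $\binom{p+1-t}{2}+(p+1-t)t+\binom{t}{2}=\binom{p+1}{2}$ and by $\Lambda(\widetilde{E}\mathbf{e}_j,\widetilde{E}\mathbf{e}_i)=-\Lambda(\widetilde{E}\mathbf{e}_i,\widetilde{E}\mathbf{e}_j)$) separates $s_{i,j;t}^{p,l}$ into a $t$-independent part plus exactly the $t$-dependent term $a_{i,j;l,t}$. Factoring out the nonzero $t$-independent scalar from the sum yields the claimed relation. The main technical obstacle is the sign-bookkeeping in assembling $s_{i,j;t}^{p,l}$ from the various twist contributions and then isolating $a_{i,j;l,t}$ as its $t$-dependent portion; once those two reductions are verified, everything else is immediate.
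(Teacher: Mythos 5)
Your proposal is correct and follows essentially the same route as the paper: rewrite the high order quantum Serre relation from $\mathcal{H}_v(\mathcal{A})$ in terms of the $\star$-product (which produces exactly the twist $v^{-s_{i,j;t}^{p,l}}$), push it through $\Psi$ using $\Psi(u_{S_i})=X_{S_i}=y_i$, and then strip off the $t$-independent part of $s_{i,j;t}^{p,l}$, leaving $a_{i,j;l,t}$. The paper's proof is just a terser statement of the same three steps, so your more detailed bookkeeping of the twist exponents is a faithful expansion of it.
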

\begin{proof}
By rewriting the high order quantum Serre relation $(\ref{gjsgx})$ in $\mathcal {H}_{\Lambda}(\A)$, and using Theorem \ref{alg-hom-cdx1} and Proposition $\ref{sy2}$, we obtain
$$\sum_{t=0}^{p+1} (-1)^t v_i^{\varepsilon (p+lc_{ij})t}v^{-s_{i,j;t}^{p,l}}\qbinom{p+1}{t}_{{v}_i} y_{i}^{p+1-t} y_{j}^l  y_{i}^{t}=0$$
in the quantum torus $\mathcal{T}_{q,\Lambda}$. Then eliminating the constant multiple from the summation, we finish the proof.

\end{proof}

For any $1\leq i\neq j\leq n$ and $0\leq t\leq 1-c_{ij}$,
set \begin{flalign*}w_{i,j;t}:=a_{i,j;1,t}=(\lr{{\bf e}_j,{\bf e}_i}-\lr{{\bf e}_i,{\bf e}_j}-2\Lambda(\widetilde{E}{\bf e}_i,\widetilde{E}{\bf e}_j))t.\end{flalign*}
Then we have the following.
\begin{corollary}
For any $1\leq i\neq j\leq n$, we have the twisted quantum Serre relations
$$\sum_{t=0}^{1-c_{ij}} (-1)^tv^{-w_{i,j;t}}\qbinom{1-c_{ij}}{t}_{{v}_i} y_i^{1-c_{ij}-t} y_j  y_i^{t}=0$$
in the quantum torus $\mathcal{T}_{q,\Lambda}$.
\end{corollary}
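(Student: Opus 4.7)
The plan is to derive this Corollary as an immediate specialization of Proposition~\ref{eq:twist-q-serre-rel-qca}, namely by setting $l=1$ and $p=-c_{ij}$. Since the full machinery of the $\Lambda$-twisted Ringel--Hall algebra, the algebra homomorphism $\Psi$ of Theorem~\ref{alg-hom-cdx1}, and the identification $y_i = X_{S_i}$ of Proposition~\ref{sy2} has already been set up in the previous subsection, no new input is required; this is a purely formal reduction.

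The first step is to check that the specialization is admissible: the hypothesis $p \geq -lc_{ij}$ in Proposition~\ref{eq:twist-q-serre-rel-qca} becomes $-c_{ij} \geq -c_{ij}$, which holds with equality. The range $0 \leq t \leq p+1$ of the summation index then becomes exactly $0 \leq t \leq 1-c_{ij}$, matching the range in the Corollary. The binomial symbol $\qbinom{p+1}{t}_{v_i}$ likewise specializes to $\qbinom{1-c_{ij}}{t}_{v_i}$.

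Next, I would simplify the two remaining scalar factors. For the factor $v_i^{\varepsilon(p+lc_{ij})t}$ appearing in Proposition~\ref{eq:twist-q-serre-rel-qca}, substituting $l=1$, $p=-c_{ij}$ gives $p+lc_{ij}=0$, so this factor equals $1$ for either choice of sign $\varepsilon$; in particular the dependence on $\varepsilon$ disappears, as it should. For the other factor, the definition
\[
a_{i,j;l,t} = (\lr{\mathbf{e}_j,\mathbf{e}_i}-\lr{\mathbf{e}_i,\mathbf{e}_j}-2\Lambda(\widetilde{E}\mathbf{e}_i,\widetilde{E}\mathbf{e}_j))lt
\]
at $l=1$ is literally $w_{i,j;t}$ by the definition placed just before the Corollary, so $v^{-a_{i,j;1,t}} = v^{-w_{i,j;t}}$.

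Combining these substitutions in the identity of Proposition~\ref{eq:twist-q-serre-rel-qca} yields the Corollary verbatim. There is no genuine obstacle here; the only thing to watch is bookkeeping of the indices and checking that the $\varepsilon$-dependent factor trivializes at $p+lc_{ij}=0$, which reflects the fact that the ordinary quantum Serre relation is the boundary case of the high order family where the two choices of sign collapse to a single relation.
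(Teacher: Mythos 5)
Your proposal is correct and is exactly the argument the paper intends: specialize Proposition~\ref{eq:twist-q-serre-rel-qca} to $l=1$, $p=-c_{ij}$, note that $p+lc_{ij}=0$ kills the $\varepsilon$-dependent factor, and observe $a_{i,j;1,t}=w_{i,j;t}$ by definition. No difference in approach.
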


\section{Fundamental relations in quantum cluster algebras}
In this section, let $B=(b_{ij})\in M_{n\times n}(\mathbb{Z})$ be a skew-symmetrizable matrix with a skew-symmetrizer $D=\operatorname{diag}(d_1,\ldots,d_n)$. Let $Q$ be the valued quiver associated with $(B,D)$.  Denote by $\widetilde{Q}$ the valued quiver obtained from $Q$ by attaching additional vertices $n+1,\ldots, 2n$ to $Q$, adding the arrow $n+i\to i$, and setting the valuation of the $(n+i)$-th vertex to be $d_{n+i}=d_i$ for each $1\leq i\leq n$.
As before, we have the $2n\times n$-matrices $\widetilde{R}={R\choose 0}, \widetilde{R}'={R'\choose I_n}$, $\widetilde{E}'=\widetilde{I}-\widetilde{R}$, $\widetilde{E}=\widetilde{I}-\widetilde{R}'$, and $\widetilde{B}=\widetilde{R}'-\widetilde{R}={B\choose I_n}$.

Let $\Lambda_1=\left(\begin{smallmatrix}
    0&-D\\ D&-DB
\end{smallmatrix}\right)$. Then $(\widetilde{B},\Lambda_1)$ is a compatible pair such that $\widetilde{B}^{\rm tr}\Lambda_1=(D,0)$.
\begin{lemma}\label{zbyl1}
For any $\alpha,\beta\in\mathbb{Z}^n$, we have
$\Lambda_1(\widetilde{E}\alpha,\widetilde{E}\beta)=0.$
\end{lemma}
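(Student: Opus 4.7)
The plan is to verify that the matrix $\widetilde{E}^{\rm tr}\Lambda_1\widetilde{E}$ equals the zero matrix, which immediately gives $\Lambda_1(\widetilde{E}\alpha,\widetilde{E}\beta)=\alpha^{\rm tr}\widetilde{E}^{\rm tr}\Lambda_1\widetilde{E}\beta=0$ for all $\alpha,\beta\in\mathbb{Z}^n$. So the statement reduces to a purely matrix-level identity.

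First I would unpack the definitions. From $\widetilde{R}'={R'\choose I_n}$ and $\widetilde{I}={I_n\choose 0}$ we get the explicit block form
\[
\widetilde{E}=\widetilde{I}-\widetilde{R}'=\begin{pmatrix} I_n-R'\\ -I_n\end{pmatrix}.
\]
Then I would carry out the block product $\Lambda_1\widetilde{E}$ using
$\Lambda_1=\left(\begin{smallmatrix}0&-D\\ D&-DB\end{smallmatrix}\right)$ to obtain
\[
\Lambda_1\widetilde{E}=\begin{pmatrix} D\\ D(I_n-R')+DB\end{pmatrix}.
\]
Using $B=R'-R$ (so $DB=DR'-DR$) the lower block simplifies to $D-DR$, giving
\[
\Lambda_1\widetilde{E}=\begin{pmatrix} D\\ D-DR\end{pmatrix}.
\]

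Next I would multiply on the left by $\widetilde{E}^{\rm tr}=(I_n-R'^{\rm tr},\;-I_n)$:
\[
\widetilde{E}^{\rm tr}\Lambda_1\widetilde{E}=(I_n-R'^{\rm tr})D-(D-DR)=DR-R'^{\rm tr}D.
\]
At this point the key input is the symmetrizability identity $DR'=R^{\rm tr}D$ recorded in Section~\ref{s3.1} (which is exactly what makes $DB$ skew-symmetric). Transposing this identity and using that $D$ is diagonal yields $R'^{\rm tr}D=DR$, so
\[
\widetilde{E}^{\rm tr}\Lambda_1\widetilde{E}=DR-DR=0,
\]
and the lemma follows.

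The computation is short and purely formal; I do not anticipate a genuine obstacle. The only place one has to be careful is the bookkeeping with $\widetilde{E}=\widetilde{I}-\widetilde{R}'$ versus $\widetilde{E}'=\widetilde{I}-\widetilde{R}$ (the primes are swapped), and keeping track of the $-I_n$ block coming from $\widetilde{R}'={R'\choose I_n}$ in this particular framed quiver $\widetilde Q$. Conceptually, the statement says that the twisting contribution in the definition \eqref{lmdc} of $\mathcal H_{\Lambda_1}(\mathcal A)$ vanishes for this specific choice of $\Lambda_1$, so the $\Lambda_1$-twisted Ringel--Hall algebra coincides with $\mathcal H_v(\mathcal A)$; this is the reason this particular compatible pair will allow one to recover the fundamental relations of Huang--Chen--Ding--Xu directly from Theorems \ref{ringel} and \ref{high} via Proposition \ref{eq:twist-q-serre-rel-qca}.
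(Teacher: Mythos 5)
Your proof is correct and follows essentially the same route as the paper: a direct block computation of $\widetilde{E}^{\rm tr}\Lambda_1\widetilde{E}$ reduced to the symmetrizability relation. The only cosmetic difference is that the paper phrases the final cancellation as $DB=E^{\rm tr}-E$ while you invoke the equivalent identity $DR'=R^{\rm tr}D$.
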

\begin{proof}
Noting that $\widetilde{E}={I_n-R'\choose-I_n}={D^{-1}E\choose-I_n}$, we have
\begin{flalign*}
\Lambda_1(\widetilde{E}\alpha,\widetilde{E}\beta)&=\alpha^{\rm tr}(E^{\rm tr}D^{-1},-I_n)\left(\begin{smallmatrix}
    0&-D\\ D&-DB
\end{smallmatrix}\right)\left(\begin{smallmatrix}
    D^{-1}E\\ -I_n
\end{smallmatrix}\right)\beta\\
&=\alpha^{\rm tr}(-D,DB-E^{\rm tr})\left(\begin{smallmatrix}
    D^{-1}E\\ -I_n
\end{smallmatrix}\right)\beta\\
&=\alpha^{\rm tr}(E^{\rm tr}-E-DB)\beta\\
&=\lr{\beta,\alpha}-\lr{\alpha,\beta}-\alpha^{\rm tr}DB\beta.
\end{flalign*}
Since $DB=D(I-R)-D(I-R')=E^{\rm tr}-E$, we obtain
$$\alpha^{\rm tr}DB\beta=\lr{\beta,\alpha}-\lr{\alpha,\beta}.$$
Thus, we finish the proof.
\end{proof}

The {\em $q$-twisted Hall algebra} $\mathcal {H}_{q}(\A)$ is defined to be the same space as $\mathcal {H}(\A)$, but with the twisted multiplication defined by
$$u_M\star u_N=q^{\lr{M,N}}u_M\diamond u_N.$$
\begin{proposition}\label{alg-hom-cdx0}
The map $\Psi:\mathcal {H}_{q}(\A)\longrightarrow\mathcal{T}_{q,\Lambda_1}$ defined on basis elements by $u_M\mapsto X_M$ is a homomorphism of algebras.
\end{proposition}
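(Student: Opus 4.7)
The plan is to derive Proposition 4.3 as an immediate consequence of Theorem \ref{alg-hom-cdx1} (the algebra homomorphism from the $\Lambda$-twisted Ringel--Hall algebra to the quantum torus), instantiated at $\Lambda=\Lambda_1$, by showing that under this particular choice of $\Lambda_1$ the multiplication in $\mathcal{H}_{\Lambda_1}(\A)$ collapses precisely to the $q$-twisted multiplication defining $\mathcal{H}_q(\A)$.

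First, I would confirm that the hypotheses of Theorem \ref{alg-hom-cdx1} apply in this setting. The pair $(\Lambda_1,\widetilde{B})$ is declared compatible in the paragraph preceding the proposition; this is checked by a direct computation
\[
\widetilde{B}^{\rm tr}\Lambda_1 = (B^{\rm tr},\,I_n)\left(\begin{smallmatrix} 0 & -D \\ D & -DB \end{smallmatrix}\right) = (D,\,-B^{\rm tr}D - DB) = (D,\,0),
\]
using that $DB$ is skew-symmetric. Hence Theorem \ref{alg-hom-cdx1} furnishes an algebra homomorphism $\Psi:\mathcal{H}_{\Lambda_1}(\A)\longrightarrow\mathcal{T}_{q,\Lambda_1}$, $u_M\mapsto X_M$, where $\mathcal{H}_{\Lambda_1}(\A)$ carries the multiplication
\[
u_M \star u_N = q^{\frac{1}{2}\Lambda_1(\widetilde{E}\mathbf{m},\widetilde{E}\mathbf{n}) + \langle M,N\rangle}\, u_M\diamond u_N
\]
given in \eqref{lmdc}.

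The main input is then Lemma \ref{zbyl1}, which asserts that $\Lambda_1(\widetilde{E}\alpha,\widetilde{E}\beta)=0$ for all $\alpha,\beta\in\mathbb{Z}^n$. Applied with $\alpha=\mathbf{m}$ and $\beta=\mathbf{n}$, this kills the $\Lambda_1$ contribution in the exponent, reducing the multiplication on $\mathcal{H}_{\Lambda_1}(\A)$ to
\[
u_M \star u_N = q^{\langle M,N\rangle}\, u_M\diamond u_N,
\]
which is exactly the defining multiplication of the $q$-twisted Hall algebra $\mathcal{H}_q(\A)$. Therefore $\mathcal{H}_{\Lambda_1}(\A)$ and $\mathcal{H}_q(\A)$ coincide as algebras, and the map $\Psi$ supplied by Theorem \ref{alg-hom-cdx1} is precisely the claimed algebra homomorphism.

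There is essentially no obstacle beyond invoking Lemma \ref{zbyl1}; the entire content of Proposition \ref{alg-hom-cdx0} is the observation that, for this specific compatible pair, the $\Lambda_1$-correction in the Hall-algebra twist vanishes on dimension vectors coming from $\widetilde{E}$, so that the $v$-twisted Euler form piece doubles up into the $q$-twist rather than being modified.
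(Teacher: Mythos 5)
Your proposal is correct and follows exactly the paper's own argument: Lemma \ref{zbyl1} kills the $\Lambda_1$-term in the twist \eqref{lmdc}, so $\mathcal{H}_{\Lambda_1}(\A)$ coincides with $\mathcal{H}_q(\A)$, and Theorem \ref{alg-hom-cdx1} then gives the homomorphism. The only addition is your explicit verification that $(\Lambda_1,\widetilde B)$ is compatible, which the paper asserts without computation; that check is correct and harmless.
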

\begin{proof}
By Lemma \ref{zbyl1}, we obtain that $\mathcal {H}_{q}(\A)$ is the same as $\mathcal {H}_{\Lambda_1}(\A)$. Then we finish the proof by Theorem \ref{alg-hom-cdx1}.
\end{proof}

\begin{theorem}\label{dyzhu}
    For any $1\leq i\neq j\leq n$ and $\varepsilon=\pm 1$, in the quantum torus $\mathcal{T}_{q,\Lambda_1}$,
    \begin{itemize}
        \item[(1)] if $b_{ij}\leq 0$, then for any positive integers $l,p$ with $p\geq-lb_{ij}$,
        \begin{equation}
           \sum_{t=0}^{p+1}(-1)^tv_{i}^{(\varepsilon p+\varepsilon lb_{ij}-lb_{ij})t} \qbinom{p+1}{t}_{{v}_i} y_i^{p+1-t} y_j^l  y_i^{t}=0;
        \end{equation}
           \item[(2)] if $b_{ij}>0$, then for any positive integers $l,p$ with $p\geq lb_{ij}$,
           \begin{equation}
               \sum_{t=0}^{p+1}(-1)^tv_{i}^{(\varepsilon p-\varepsilon lb_{ij}-lb_{ij})t} \qbinom{p+1}{t}_{{v}_i} y_i^{p+1-t} y_j^l  y_i^{t}=0.
           \end{equation}
    \end{itemize}
\end{theorem}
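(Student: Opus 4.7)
The plan is to deduce Theorem~\ref{dyzhu} directly from the twisted high order quantum Serre relations in Proposition~\ref{eq:twist-q-serre-rel-qca}, applied to the specific compatible pair $(\widetilde{B},\Lambda_1)$, and then simplify the twisting exponents using Lemma~\ref{zbyl1} together with the ``no two-cycle'' hypothesis on $Q$.

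First, I would specialize Proposition~\ref{eq:twist-q-serre-rel-qca} to $\Lambda=\Lambda_1$, which yields
\[
\sum_{t=0}^{p+1}(-1)^t v_i^{\varepsilon(p+lc_{ij})t}v^{-a_{i,j;l,t}}\qbinom{p+1}{t}_{v_i}y_i^{p+1-t}y_j^{l}y_i^{t}=0
\]
for $p\ge -lc_{ij}$. By Lemma~\ref{zbyl1}, $\Lambda_1(\widetilde{E}{\bf e}_i,\widetilde{E}{\bf e}_j)=0$, so the exponent reduces to $a_{i,j;l,t}=(\lr{{\bf e}_j,{\bf e}_i}-\lr{{\bf e}_i,{\bf e}_j})lt$. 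The computation inside the proof of Lemma~\ref{zbyl1} already evaluates this: $\lr{{\bf e}_j,{\bf e}_i}-\lr{{\bf e}_i,{\bf e}_j}={\bf e}_i^{\rm tr}DB{\bf e}_j=d_ib_{ij}$, hence $v^{-a_{i,j;l,t}}=v_i^{-b_{ij}lt}$. Merging this with the factor $v_i^{\varepsilon(p+lc_{ij})t}$ gives a combined exponent $(\varepsilon p+\varepsilon lc_{ij}-lb_{ij})t$ on $v_i$.

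Next, I would eliminate $c_{ij}$ in favor of $b_{ij}$. Since $\widetilde{Q}$ has no two-cycle between $i$ and $j$ (these vertices come from $Q$ itself, and the added frozen vertices only connect to a single mutable vertex), for $1\le i\ne j\le n$ exactly one of $r_{ij}$ and $r'_{ij}$ vanishes. Using $b_{ij}=r'_{ij}-r_{ij}$ and $c_{ij}=-r_{ij}-r'_{ij}$, this splits into two mutually exclusive cases:
\[
b_{ij}\le 0\ \Longrightarrow\ r'_{ij}=0,\quad c_{ij}=b_{ij};\qquad b_{ij}>0\ \Longrightarrow\ r_{ij}=0,\quad c_{ij}=-b_{ij}.
\]
Substituting $c_{ij}=b_{ij}$ gives the exponent $\varepsilon p+\varepsilon lb_{ij}-lb_{ij}$ and turns the hypothesis $p\ge -lc_{ij}$ into $p\ge -lb_{ij}$, producing statement (1); substituting $c_{ij}=-b_{ij}$ gives the exponent $\varepsilon p-\varepsilon lb_{ij}-lb_{ij}$ and the hypothesis $p\ge lb_{ij}$, producing statement (2).

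There is no real obstacle: once Lemma~\ref{zbyl1} has killed the $\Lambda$-contribution and the identity $\lr{{\bf e}_j,{\bf e}_i}-\lr{{\bf e}_i,{\bf e}_j}=d_ib_{ij}$ is in hand, the theorem is a bookkeeping statement that repackages the twisted Serre relation in terms of $b_{ij}$ instead of $c_{ij}$. The only point requiring care is the case distinction on the sign of $b_{ij}$, which is precisely the mechanism by which the ``no two-cycle'' assumption enters and permits the clean formulas $c_{ij}=\pm b_{ij}$.
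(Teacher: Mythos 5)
Your proposal is correct and follows essentially the same route as the paper: specialize Proposition~\ref{eq:twist-q-serre-rel-qca} to $\Lambda_1$, invoke Lemma~\ref{zbyl1} to kill the $\Lambda$-contribution (the paper records $\lr{{\bf e}_i,{\bf e}_j}$ and $\lr{{\bf e}_j,{\bf e}_i}$ separately in each case, whereas you use the uniform identity $\lr{{\bf e}_j,{\bf e}_i}-\lr{{\bf e}_i,{\bf e}_j}=d_ib_{ij}$, which is equivalent since $a_{i,j;l,t}$ depends only on the difference), and then split on the sign of $b_{ij}$ to replace $c_{ij}$ by $\pm b_{ij}$. The only cosmetic quibble is that ``exactly one of $r_{ij}$, $r'_{ij}$ vanishes'' should read ``at most one is nonzero,'' but this does not affect the argument.
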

\begin{proof}
If~$b_{ij}\leq 0$, then $c_{ij}=b_{ij}$, $\lr{{\bf e}_i,{\bf e}_j}=0$ and $\lr{{\bf e}_j,{\bf e}_i}=d_ib_{ij}$;~if~$b_{ij}> 0$, then $c_{ij}=-b_{ij}$, $\lr{{\bf e}_j,{\bf e}_i}=0$ and $\lr{{\bf e}_i,{\bf e}_j}=-d_ib_{ij}$. Using Proposition \ref{eq:twist-q-serre-rel-qca} and Lemma \ref{zbyl1}, we finish the proof.
\end{proof}
\begin{remark}
Let $\mathfrak{q}$ be an indeterminate. For any positive integer $m$ and $0\leq d\leq m$, set
\begin{equation*}
\llbracket m\rrbracket=\frac{\mathfrak{q}^m-1}{\mathfrak{q}-1},~\llbracket m\rrbracket^!=\prod\limits_{t=1}^m\llbracket t\rrbracket,~\llbracket\begin{smallmatrix}
m\\d
\end{smallmatrix}\rrbracket=\frac{\llbracket m\rrbracket^!}{\llbracket d\rrbracket^!\llbracket m-d\rrbracket^!}.
\end{equation*}
Then \begin{equation}\label{fangkh}\llbracket\begin{smallmatrix}
m\\d
\end{smallmatrix}\rrbracket_{q_i}=v_i^{d(m-d)}[ \begin{smallmatrix}
m\\d
\end{smallmatrix}]_{v_i}.\end{equation}
\end{remark}
\begin{corollary}\label{jibgx}
    For any $1\leq i\neq j\leq n$ and $\varepsilon=\pm 1$, in the quantum torus $\mathcal{T}_{q,\Lambda_1}$,
    \begin{itemize}
        \item[(1)] if $b_{ij}\leq 0$, then for any positive integers $l,p$ with $p\geq-lb_{ij}$,
        \begin{equation}\label{eqn:fundamental-rel-negative0}
           \sum_{t=0}^{p+1}(-1)^tv_{i}^{t(t-1)+(\varepsilon-1)(p+lb_{ij})t}\qqbinom{p+1}{t}_{q_i} y_i^{p+1-t} y_j^l  y_i^{t}=0;
        \end{equation}
           \item[(2)] if $b_{ij}>0$, then for any positive integers $l,p$ with $p\geq lb_{ij}$,
           \begin{equation}\label{eqn:fundamental-rel-positive0}
           \sum_{t=0}^{p+1}(-1)^tv_{i}^{((\varepsilon-1)p-(\varepsilon+1)lb_{ij})t+t(t-1)}\qqbinom{p+1}{t}_{q_i} y_i^{p+1-t} y_j^l  y_i^{t}=0.
        \end{equation}
    \end{itemize}
\end{corollary}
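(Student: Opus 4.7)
The plan is to derive Corollary \ref{jibgx} directly from Theorem \ref{dyzhu} by converting each quantum binomial coefficient $\qbinom{p+1}{t}_{v_i}$ to its square-bracket counterpart $\qqbinom{p+1}{t}_{q_i}$ using identity (\ref{fangkh}), and then consolidating the resulting powers of $v_i$ into a single exponent in each summand.

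First, I would rearrange (\ref{fangkh}) as
\begin{equation*}
\qbinom{p+1}{t}_{v_i}=v_i^{-t(p+1-t)}\qqbinom{p+1}{t}_{q_i}
\end{equation*}
and substitute this into every summand of Theorem \ref{dyzhu}(1). This introduces an additional factor $v_i^{-t(p+1-t)}=v_i^{t^2-(p+1)t}$, which combines with the pre-existing factor $v_i^{(\varepsilon p+\varepsilon lb_{ij}-lb_{ij})t}$ to produce the exponent
\begin{equation*}
(\varepsilon p+\varepsilon lb_{ij}-lb_{ij})t+t^2-(p+1)t=(\varepsilon-1)(p+lb_{ij})t+t(t-1),
\end{equation*}
which matches (\ref{eqn:fundamental-rel-negative0}) exactly, establishing part (1).

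Part (2) proceeds by the same substitution applied to Theorem \ref{dyzhu}(2), yielding the combined exponent
\begin{equation*}
(\varepsilon p-\varepsilon lb_{ij}-lb_{ij})t+t^2-(p+1)t=((\varepsilon-1)p-(\varepsilon+1)lb_{ij})t+t(t-1),
\end{equation*}
which agrees with (\ref{eqn:fundamental-rel-positive0}). Because the corollary is essentially a cosmetic reformulation of Theorem \ref{dyzhu} in the normalization $\qqbinom{\cdot}{\cdot}_{q_i}$, there is no genuine conceptual obstacle here; the only step requiring attention is the exponent arithmetic, which I have verified in both cases above.
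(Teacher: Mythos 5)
Your proposal is correct and follows exactly the same route as the paper, which likewise derives the corollary from Theorem \ref{dyzhu} together with identity (\ref{fangkh}); the only difference is that you spell out the exponent arithmetic, which checks out in both cases.
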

\begin{proof}
By Theorem \ref{dyzhu} and the equation $(\ref{fangkh})$, we finish the proof.
\end{proof}
\begin{remark}
The equation $(\ref{eqn:fundamental-rel-negative0})$ for $\varepsilon=1$ and the equation $(\ref{eqn:fundamental-rel-positive0})$ for $\varepsilon=-1$ has been given in
\cite[Theorem 4.2]{HCDX25}. The relations $(\ref{eqn:fundamental-rel-negative0})$ and $(\ref{eqn:fundamental-rel-positive0})$ are the so-called {\em high order fundamental relations}.
\end{remark}
As a special case, we have the following so-called {\em fundamental relations}.
\begin{corollary}\cite[Theorem 3.4]{HCDX25}
    For any $1\leq i\neq j\leq n$, in the quantum torus $\mathcal{T}_{q,\Lambda_1}$,
    \begin{itemize}
        \item[(1)] if $b_{ij}\leq 0$, we have
        \begin{equation}\label{eqn:fundamental-rel-negative1}
           \sum_{t=0}^{1-b_{ij}}(-1)^tq_{i}^{\frac{t(t-1)}{2}}\qqbinom{1-b_{ij}}{t}_{q_i} y_i^{1-b_{ij}-t} y_j  y_i^{t}=0;
        \end{equation}
           \item[(2)] if $b_{ij}>0$, we have
           \begin{equation}\label{eqn:fundamental-rel-positive1}
           \sum_{t=0}^{1+b_{ij}}(-1)^tq_{i}^{\frac{t(t-1)}{2}-tb_{ij}}\qqbinom{1+b_{ij}}{t}_{q_i} y_i^{1+b_{ij}-t} y_j  y_i^{t}=0.
        \end{equation}
    \end{itemize}
\end{corollary}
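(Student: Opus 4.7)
The plan is to derive the fundamental relations as a specialization of the high order fundamental relations already established in Corollary \ref{jibgx}. The key observation is that the fundamental relations are precisely the $l=1$ case of Corollary \ref{jibgx} with a carefully chosen $p$ (and in one case, a carefully chosen $\varepsilon$), so no new computation is needed beyond bookkeeping of the exponent of $v_i$.

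More concretely, I would split into the two cases as in the statement. For case (1), where $b_{ij}\leq 0$, I would apply Corollary \ref{jibgx}(1) with $l=1$ and $p=-b_{ij}$; then $p+1=1-b_{ij}$ so the summation range matches, and since $p+lb_{ij}=-b_{ij}+b_{ij}=0$ the term $(\varepsilon-1)(p+lb_{ij})t$ vanishes for either choice of $\varepsilon$. The remaining exponent of $v_i$ is $t(t-1)$, so $v_i^{t(t-1)}=q_i^{t(t-1)/2}$, producing exactly the relation \eqref{eqn:fundamental-rel-negative1}.

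For case (2), where $b_{ij}>0$, I would apply Corollary \ref{jibgx}(2) with $l=1$, $p=b_{ij}$, and $\varepsilon=-1$; then $p+1=1+b_{ij}$ so the summation range again matches, and the exponent $((\varepsilon-1)p-(\varepsilon+1)lb_{ij})t+t(t-1)$ reduces to $-2b_{ij}t+t(t-1)$, so $v_i^{-2b_{ij}t+t(t-1)}=q_i^{t(t-1)/2-tb_{ij}}$, which yields \eqref{eqn:fundamental-rel-positive1} verbatim.

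There is essentially no obstacle here; the only thing requiring care is choosing $\varepsilon$ correctly in case (2) (namely $\varepsilon=-1$) so that the prefactor $v_i^{-2b_{ij}t}$ emerges, and verifying the conversion $v_i^{2a}=q_i^{a}$ so the exponents line up with the integer powers of $q_i$ appearing in the statement. Once these small checks are in hand, the corollary is an immediate specialization of Corollary \ref{jibgx}.
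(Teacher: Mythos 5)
Your proof is correct and follows essentially the same route as the paper: specializing Corollary \ref{jibgx} to $l=1$ with $p=-b_{ij}$ (resp.\ $p=b_{ij}$) and converting $v_i^{2a}$ to $q_i^{a}$. The only cosmetic difference is that the paper takes $\varepsilon=1$ in case (2) whereas you take $\varepsilon=-1$; since $p=lb_{ij}$ makes the exponent $((\varepsilon-1)p-(\varepsilon+1)lb_{ij})t$ equal to $-2lb_{ij}t$ for either sign, the choice is immaterial and no extra care is actually needed there.
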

\begin{proof}
Taking $(l,p)=(1,-b_{ij})$ and $(l,p)=(1,b_{ij})$, together with $\varepsilon=1$, in $(\ref{eqn:fundamental-rel-negative0})$ and $(\ref{eqn:fundamental-rel-positive0})$, respectively, we complete the proof.
\end{proof}

\section{Quantum Serre relations in quantum cluster algebras}
In this section, let ${Q}$ be an acyclic valued quiver with the vertex set
$\{1,\ldots,n\}$. Let $\widetilde{Q}$ be the valued quiver obtained from $Q$ by attaching additional vertices $n+1,\ldots, 2n$ to $Q$, adding the arrow $n+i\to i$, and setting the valuation of the $(n+i)$-th vertex to be $d_{n+i}=d_i$ for each $1\leq i\leq n$. As the subsection \ref{s3.1}, we have the matrices $R,R',\widetilde{R},\widetilde{R}',\widetilde{E},\widetilde{E}'$, and also $c_{ij}=-r_{ij}-r_{ij}'$ for any $1\leq i\neq j\leq n$.  Let $\mathcal{A}$ be the category of finite-dimensional $k$-representations of $Q$, and $E$ be the matrix of the Euler form of $\A$ under the standard basis. Since $\A$ is hereditary, it is well-known that $I_n-R$ is invertible in the matrix ring ${M}_{n\times n}(\mathbb{Z})$. That is, $D^{-1}E^{\rm tr}$ is invertible in the matrix ring ${M}_{n\times n}(\mathbb{Z})$.
\subsection{A special compatible pair}\label{s:speical-compatible pair}
Define the matrices
\begin{equation}
\widetilde{B}:=\left(
  \begin{array}{c}
    D^{-1}(E^{\rm tr}-E) \\
    I_n \\
  \end{array}
\right)=\left(
  \begin{array}{c}
    R'-R \\
    I_n \\
  \end{array}
\right)=\widetilde{ {R}}'-\widetilde{ {R}}\in  {M}_{2n\times n}(\mathbb{Z})
\end{equation}
and
\begin{equation}
\Gamma:=\left(
  \begin{array}{cc}
    E^{\rm tr}-E & -E^{\rm tr}-E\\
    E^{\rm tr}+E & E^{\rm tr}-E\\
  \end{array}
\right)\in  {M}_{2n\times 2n}(\mathbb{Z}).
\end{equation}
Set
\begin{equation}
\Theta:=\left(
   \begin{array}{cc}
    -D^{-1}E & -D^{-1}E^{\rm tr}\\
    I_n & 0\\
  \end{array}
\right)=\left(
   \begin{array}{cc}
    R'-I_n & R-I_n\\
    I_n & 0\\
  \end{array}
\right)\in  {M}_{2n\times 2n}(\mathbb{Z}).
\end{equation}
It is easy to obtain the invertible matrix of $\Theta$
\begin{equation}
\Theta^{-1}=\left(
   \begin{array}{cc}
    0 &I_n\\
    -(E^{\rm tr})^{-1}D & -(E^{\rm tr})^{-1}E\\
  \end{array}
\right)\in  {M}_{2n\times 2n}(\mathbb{Z}).
\end{equation}
Define $\Lambda_0:=(\Theta^{-1})^{\rm tr}\Gamma \Theta^{-1}\in  {M}_{2n\times 2n}(\mathbb{Z})$, and clearly it is skew-symmetric. Then we have the following.
\begin{proposition}\label{xrd}
${\widetilde{B}}^{\rm tr}\Lambda_0=(2D,0)$. That is, $(\Lambda_0,\widetilde{B})$ is a compatible pair.
\end{proposition}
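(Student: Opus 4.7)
The plan is to exploit the factorization $\Lambda_0=(\Theta^{-1})^{\rm tr}\Gamma\Theta^{-1}$ and compute $\widetilde{B}^{\rm tr}\Lambda_0$ as a product of three block matrices by working from the outside in, i.e., by first simplifying $\Theta^{-1}\widetilde{B}$. This is purely a matter of block-matrix arithmetic once we use the invertibility of $E^{\rm tr}$, which is guaranteed since $Q$ is acyclic.

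First I would write
\[
\widetilde{B}^{\rm tr}\Lambda_0=(\Theta^{-1}\widetilde{B})^{\rm tr}\,\Gamma\,\Theta^{-1},
\]
and compute $\Theta^{-1}\widetilde{B}$ directly from the block formulas. Using $\widetilde{B}=\binom{D^{-1}(E^{\rm tr}-E)}{I_n}$ and the given expression for $\Theta^{-1}$, the top block collapses to $I_n$ and the bottom block reduces, after the cancellation $-(E^{\rm tr})^{-1}(E^{\rm tr}-E)-(E^{\rm tr})^{-1}E=-I_n$, to $-I_n$. Hence $\Theta^{-1}\widetilde{B}=\binom{I_n}{-I_n}$, a very clean intermediate output which is the crux of the calculation.

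Next I would multiply the row vector $(I_n,-I_n)$ into $\Gamma$. The two columns of $\Gamma$ give
\[
(I_n,-I_n)\,\Gamma=\bigl((E^{\rm tr}-E)-(E^{\rm tr}+E),\ (-E^{\rm tr}-E)-(E^{\rm tr}-E)\bigr)=(-2E,-2E^{\rm tr}).
\]
Finally I would multiply $(-2E,-2E^{\rm tr})$ on the right by $\Theta^{-1}$. The first block of the product is $(-2E^{\rm tr})\cdot(-(E^{\rm tr})^{-1}D)=2D$, and the second block is $-2E+(-2E^{\rm tr})\cdot(-(E^{\rm tr})^{-1}E)=-2E+2E=0$. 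This yields $\widetilde{B}^{\rm tr}\Lambda_0=(2D,0)$, as required.

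There is no real obstacle here: the whole proof is a three-step block multiplication, with $E^{\rm tr}$ invertible providing the only nontrivial input. The only place to be careful is the bottom block of $\Theta^{-1}\widetilde{B}$, where one must keep the factors $(E^{\rm tr})^{-1}D$ and $(E^{\rm tr})^{-1}E$ in the right order so that the cancellation to $-I_n$ is transparent; once that intermediate identity is in hand, the remaining two matrix products are automatic.
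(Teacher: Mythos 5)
Your computation is correct and is essentially the paper's argument — a direct block-matrix verification using the stated formula for $\Theta^{-1}$ — the only difference being that you re-associate the product as $(\Theta^{-1}\widetilde{B})^{\rm tr}\,\Gamma\,\Theta^{-1}$ and first collapse $\Theta^{-1}\widetilde{B}$ to ${I_n\choose -I_n}$, whereas the paper computes the full matrix $\Lambda_0=(\Theta^{-1})^{\rm tr}\Gamma\Theta^{-1}$ explicitly and then multiplies by $\widetilde{B}^{\rm tr}$. Your ordering is a mild simplification (it avoids ever writing down $\Lambda_0$, and in particular never needs $E^{-1}$, only $(E^{\rm tr})^{-1}$), but it is the same proof in substance.
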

\begin{proof} By direct calculations, we obtain
\begin{flalign*}
\Lambda_0=(\Theta^{-1})^{\rm tr}\Gamma \Theta^{-1}=\begin{pmatrix} \begin{smallmatrix}D(E^{-1}-(E^{\rm tr})^{-1})D\quad & -D(E^{-1}E^{\rm tr}+(E^{\rm tr})^{-1}E) \\ (E(E^{\rm tr})^{-1}+E^{\rm tr}E^{-1})D\quad & E^{\rm tr}-E+E(E^{\rm tr})^{-1}E-E^{\rm tr}E^{-1}E^{\rm tr} \end{smallmatrix}\end{pmatrix}
\end{flalign*}
and then can easily finish the proof.
\end{proof}

\subsection{From Ringel--Hall algebras to Quantum cluster algebras}\label{s:hall-to-qca}
In this subsection, we take $\Lambda_2=\frac{1}{2}\Lambda_0$. Then we have ${\widetilde{B}}^{\rm tr}\Lambda_2=(D,0)$. For this special compatible pair $(\Lambda_2, \widetilde{B})$, the corresponding quantum cluster algebra
$\mathcal{A}_{\mathfrak{q}}(\Lambda_2,\widetilde{B})$ is called the {\em quantum cluster algebra with principal coefficients}.

Let $\A_q(Q)$ be the quantum cluster algebra  $\mathcal{A}_{\mathfrak{q}}(\Lambda_2,\widetilde{B})$ with principal
coefficients, specialised at $\mathfrak{q}=q$.
According to \cite{Rupel2}, $\A_q(Q)$ is the $\mathbb{Q}\mathbb{P}$-subalgebra of $\mathcal {F}_{q,\Lambda_2}$ generated by
\begin{equation*}
\{x_i, X_{M}~|~1\leq i\leq n,~M\in\Ind(\A)~~\text{is rigid}\}.
\end{equation*}

\begin{lemma}\label{zhishu1}
For any $\alpha,\beta,\gamma,\delta\in\mathbb{Z}^n$, we have
$$\Gamma(\left(\begin{smallmatrix}
    \alpha\\ \beta
\end{smallmatrix}\right),\left(\begin{smallmatrix}
    \gamma\\ \delta
\end{smallmatrix}\right))=\lr{\gamma-\delta,\alpha}+\lr{\beta,\gamma-\delta}+\lr{\gamma+\delta,\beta}
-\lr{\alpha,\gamma+\delta}.$$
\end{lemma}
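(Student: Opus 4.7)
The plan is a direct matrix computation. I would unfold the bilinear form $\Gamma$ using its $2\times 2$ block structure: writing the left and right arguments as stacked column vectors of length $2n$, the pairing becomes
\[\Gamma\left(\left(\begin{smallmatrix}\alpha\\ \beta\end{smallmatrix}\right),\left(\begin{smallmatrix}\gamma\\ \delta\end{smallmatrix}\right)\right)=\alpha^{\rm tr}(E^{\rm tr}-E)\gamma-\alpha^{\rm tr}(E^{\rm tr}+E)\delta+\beta^{\rm tr}(E^{\rm tr}+E)\gamma+\beta^{\rm tr}(E^{\rm tr}-E)\delta.\]
The key input for translating this into Euler form values is the pair of scalar identities $\mu^{\rm tr}E\nu=\lr{\mu,\nu}$ and $\mu^{\rm tr}E^{\rm tr}\nu=\lr{\nu,\mu}$, both of which follow immediately from the fact that $E$ is the matrix of the Euler form of $\A$ in the basis of simples.

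Applying these two identities to each of the four blocks produces eight scalars; for instance $\alpha^{\rm tr}(E^{\rm tr}-E)\gamma=\lr{\gamma,\alpha}-\lr{\alpha,\gamma}$, with analogous expansions for the $(\alpha,\delta)$, $(\beta,\gamma)$ and $(\beta,\delta)$ blocks. I would then regroup the eight resulting summands according to the four pairings in $\{\alpha,\beta\}\times\{\gamma,\delta\}$ and compare with the expansion of the claimed right-hand side
\[\lr{\gamma-\delta,\alpha}+\lr{\beta,\gamma-\delta}+\lr{\gamma+\delta,\beta}-\lr{\alpha,\gamma+\delta},\]
which after distributing also produces eight Euler form terms, one for each such pairing together with its reverse. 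A term-by-term matching then closes the argument.

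The only obstacle is bookkeeping: one must respect the antisymmetry of the diagonal blocks $E^{\rm tr}-E$ (which yield differences of Euler forms) against the symmetry of the off-diagonal blocks $\pm(E^{\rm tr}+E)$ (which yield sums), and keep track of the minus sign coming from the $-(E^{\rm tr}+E)$ entry in the upper right of $\Gamma$. Since the identity is linear in each of $\alpha,\beta,\gamma,\delta$ and uses nothing beyond the definition of the Euler form, no deeper input is required; the lemma is essentially a $2\times 2$ block verification carried out at the level of scalar bilinear forms.
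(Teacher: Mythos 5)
Your proposal is correct and follows exactly the same route as the paper's proof: expand the $2\times 2$ block pairing into eight scalar terms, convert each via $\mu^{\rm tr}E\nu=\lr{\mu,\nu}$ and $\mu^{\rm tr}E^{\rm tr}\nu=\lr{\nu,\mu}$, and regroup to match the stated right-hand side. No gaps.
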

\begin{proof} By direct calculations, we have
\begin{flalign*}
&\Gamma(\left(\begin{smallmatrix}
    \alpha\\ \beta
\end{smallmatrix}\right),\left(\begin{smallmatrix}
    \gamma\\ \delta
\end{smallmatrix}\right))\\&=(\alpha^{\rm tr},\beta^{\rm tr})\left(\begin{smallmatrix}
    E^{\rm tr}-E~ & -E^{\rm tr}-E\\
    E^{\rm tr}+E~ & E^{\rm tr}-E\\
 \end{smallmatrix}\right)\left(\begin{smallmatrix}
    \gamma\\ \delta
\end{smallmatrix}\right)\\
 &=\alpha^{\rm tr}E^{\rm tr}\gamma-\alpha^{\rm tr}E\gamma+\beta^{\rm tr}E^{\rm tr}\gamma+\beta^{\rm tr}E\gamma-\alpha^{\rm tr}E^{\rm tr}\delta-\alpha^{\rm tr}E\delta
 +\beta^{\rm tr}E^{\rm tr}\delta-\beta^{\rm tr}E\delta\\
 &=\lr{\gamma,\alpha}-\lr{\alpha,\gamma}+\lr{\gamma,\beta}+\lr{\beta,\gamma}-\lr{\delta,\alpha}-\lr{\alpha,\delta}+\lr{\delta,\beta}-\lr{\beta,\delta}\\
 &=\lr{\gamma-\delta,\alpha}+\lr{\beta,\gamma-\delta}+\lr{\gamma+\delta,\beta}
-\lr{\alpha,\gamma+\delta}.
\end{flalign*}
\end{proof}
\begin{lemma}\label{zhishu}
For any $\alpha,\beta\in\mathbb{Z}^n$, we have

$(1)$~$\Lambda(\widetilde{E}\alpha,\widetilde{E}'\beta)=-\frac{1}{2}(\alpha,\beta)$;\quad
$(2)$~$\Lambda(\widetilde{E}\alpha,\widetilde{E}\beta)=\frac{1}{2}(\lr{\beta,\alpha}-\lr{\alpha,\beta})$.
\end{lemma}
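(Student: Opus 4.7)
The plan is to translate the defining identity $\Lambda_0 = (\Theta^{-1})^{\rm tr}\Gamma\,\Theta^{-1}$ into an equality of bilinear forms on $\mathbb{Z}^{2n}$, then evaluate it on suitably chosen block vectors so that Lemma \ref{zhishu1} can be applied directly. Throughout, $\Lambda$ stands for $\Lambda_2=\tfrac12\Lambda_0$ as fixed at the start of Section \ref{s:hall-to-qca}.

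First I would establish the key block decomposition
\[
\Theta \;=\; \bigl(-\widetilde{E} \,\bigm|\, -\widetilde{E}'\bigr),
\]
viewing $\Theta$ as two $2n\times n$ column blocks. Using the Euler form identity $E = D(I_n-R') = (I_n-R^{\rm tr})D$ recalled in Section \ref{s3.1}, one rewrites $I_n-R' = D^{-1}E$ and $I_n-R = D^{-1}E^{\rm tr}$; comparing with $\widetilde{E} = \binom{I_n-R'}{-I_n}$ and $\widetilde{E}' = \binom{I_n-R}{0}$ yields the identification on each block.

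From $\Lambda_0 = (\Theta^{-1})^{\rm tr}\Gamma\,\Theta^{-1}$ one then reads off $\Gamma(u,v) = \Lambda_0(\Theta u, \Theta v)$ for all $u,v \in \mathbb{Z}^{2n}$. For part (1), I would substitute $u = \binom{\alpha}{0}$ and $v = \binom{0}{\beta}$, so that $\Theta u = -\widetilde{E}\alpha$ and $\Theta v = -\widetilde{E}'\beta$. By bilinearity and $\Lambda = \tfrac12\Lambda_0$, this gives
\[
2\,\Lambda(\widetilde{E}\alpha, \widetilde{E}'\beta) \;=\; \Gamma\!\left(\tbinom{\alpha}{0}, \tbinom{0}{\beta}\right),
\]
and Lemma \ref{zhishu1} applied to the tuple $(\alpha,0,0,\beta)$ collapses the right-hand side to $-\langle\beta,\alpha\rangle - \langle\alpha,\beta\rangle = -(\alpha,\beta)$, proving (1). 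For part (2), I take instead $u = \binom{\alpha}{0}$, $v = \binom{\beta}{0}$, so $\Theta v = -\widetilde{E}\beta$; Lemma \ref{zhishu1} applied to $(\alpha,0,\beta,0)$ then yields $\langle\beta,\alpha\rangle - \langle\alpha,\beta\rangle$, giving (2).

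No essential difficulty is anticipated; the argument is mechanical once the decomposition $\Theta = (-\widetilde{E}\mid -\widetilde{E}')$ is in hand. The only points requiring care are the $D^{-1}E$ versus $D^{-1}E^{\rm tr}$ bookkeeping in the column-block identification, and faithfully tracking the factor of two coming from the normalization $\Lambda = \tfrac12\Lambda_0$.
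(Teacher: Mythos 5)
Your proof is correct. For part (1) it is essentially the paper's own argument in a slightly repackaged form: the paper computes $\Theta^{-1}\widetilde{E}\alpha=\binom{-\alpha}{0}$ and $\Theta^{-1}\widetilde{E}'\beta=\binom{0}{-\beta}$ from the explicit formula for $\Theta^{-1}$, which is precisely the content of your column-block identification $\Theta=(-\widetilde{E}\mid -\widetilde{E}')$; both arguments then finish by evaluating $\Gamma$ via Lemma \ref{zhishu1} and dividing by the factor $2$ coming from $\Lambda=\tfrac12\Lambda_0$. Where you genuinely diverge is part (2): rather than repeating the $\Gamma$-computation, the paper writes $\widetilde{E}'=\widetilde{E}+\widetilde{B}$ and invokes the compatibility identity $\Lambda(\widetilde{E}\alpha,\widetilde{B}\beta)=-\lr{\beta,\alpha}$ (citing \cite[Lemma 3.1]{DSC}), so that (2) follows from (1) by bilinearity. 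Your direct evaluation of $\Gamma$ at $\bigl(\binom{\alpha}{0},\binom{\beta}{0}\bigr)$ is self-contained and avoids the external reference, at the cost of one more application of Lemma \ref{zhishu1}; the paper's route has the mild advantage of making visible that (2) is a formal consequence of (1) together with the compatibility of the pair $(\Lambda_2,\widetilde{B})$. All the bookkeeping in your version checks out, including the identifications $I_n-R'=D^{-1}E$, $I_n-R=D^{-1}E^{\rm tr}$ and the normalization factor of two.
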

\begin{proof}
Note that $\widetilde{E}={D^{-1}E\choose -I_n}$ and $\widetilde{E}'={D^{-1}E^{\rm tr}\choose 0}$.

$(1)$~By definitions, we have
\begin{flalign*}
&\Lambda(\widetilde{E}\alpha,\widetilde{E}'\beta)\\&=\frac{1}{2}\alpha^{\rm tr}(E^{\rm tr}D^{-1},-I_n)\left( \begin{smallmatrix}
    0~& -DE^{-1}\\
    I_n~& -E^{\rm tr}E^{-1}\\
 \end{smallmatrix}\right)\Gamma\left(\begin{smallmatrix}
    0~& I_n\\
    -(E^{\rm tr})^{-1}D~& -(E^{\rm tr})^{-1}E\\
 \end{smallmatrix}\right)\left(\begin{smallmatrix}
    D^{-1}E^{\rm tr}\\ 0
\end{smallmatrix}\right)\beta\\
 &=\frac{1}{2}\alpha^{\rm tr}(-I_n,0)\Gamma\left(\begin{smallmatrix}
    0\\ -I_n
\end{smallmatrix}\right)\beta\\
 &=\frac{1}{2}\Gamma(\left(\begin{smallmatrix}
    -\alpha\\ 0
\end{smallmatrix}\right),\left(\begin{smallmatrix}
    0\\ -\beta
\end{smallmatrix}\right))\\
 &=-\frac{1}{2}(\alpha,\beta).
\end{flalign*}

$(2)$~Using $\widetilde{E}'=\widetilde{E}+\widetilde{B}$ and \cite[Lemma 3.1]{DSC}, we obtain
\begin{flalign*}\Lambda(\widetilde{E}\alpha,\widetilde{E}\beta)&=\Lambda(\widetilde{E}\alpha,\widetilde{E}'\beta)-\Lambda(\widetilde{E}\alpha,\widetilde{B}\beta)\\
&=-\frac{1}{2}(\alpha,\beta)+\lr{\beta,\alpha}\\
&=\frac{1}{2}(\lr{\beta,\alpha}-\lr{\alpha,\beta}).
\end{flalign*}
\end{proof}

\begin{lemma}
For any $M,N\in\A$, in $\H_{\Lambda_2}(\A)$ we have
\begin{flalign}u_{M}\star u_{N}=v^{\frac{1}{2}\lr{{\bf n},{\bf m}}+\frac{3}{2}\lr{{\bf m},{\bf n}}}\sum_{[L]}\frac{|\mathrm{Ext}_{\A}^{1}(M,N)_{L}|}{|\mathrm{Hom}_{\A}(M,N)|}u_L.
\end{flalign}
\end{lemma}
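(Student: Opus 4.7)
The statement is a direct computation, so the plan is to unfold the three layers of multiplication (twisted by $\Lambda_2$, twisted by the Euler form, and the untwisted Hall product) and then substitute the value of $\Lambda_2(\widetilde{E}\mathbf{m},\widetilde{E}\mathbf{n})$ supplied by Lemma \ref{zhishu}.

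First I would write, directly from \eqref{lmdc} applied with $\Lambda=\Lambda_2$,
\begin{equation*}
u_M\star u_N = v^{\Lambda_2(\widetilde{E}\mathbf{m},\widetilde{E}\mathbf{n}) + \langle M,N\rangle}\, u_M\ast u_N,
\end{equation*}
and then expand $u_M\ast u_N = v^{\langle M,N\rangle}\, u_M\diamond u_N$ from the definition of the Ringel--Hall twist. Combining these gives
\begin{equation*}
u_M\star u_N = v^{\Lambda_2(\widetilde{E}\mathbf{m},\widetilde{E}\mathbf{n}) + 2\langle \mathbf{m},\mathbf{n}\rangle}\, u_M\diamond u_N.
\end{equation*}

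Next I would invoke part $(2)$ of Lemma \ref{zhishu} (which is the place where the specific choice $\Lambda_2=\frac12\Lambda_0$ enters) to replace $\Lambda_2(\widetilde{E}\mathbf{m},\widetilde{E}\mathbf{n})$ by $\tfrac12(\langle\mathbf{n},\mathbf{m}\rangle-\langle\mathbf{m},\mathbf{n}\rangle)$. The exponent then simplifies as
\begin{equation*}
\tfrac{1}{2}\langle\mathbf{n},\mathbf{m}\rangle -\tfrac{1}{2}\langle\mathbf{m},\mathbf{n}\rangle + 2\langle\mathbf{m},\mathbf{n}\rangle = \tfrac{1}{2}\langle\mathbf{n},\mathbf{m}\rangle + \tfrac{3}{2}\langle\mathbf{m},\mathbf{n}\rangle,
\end{equation*}
which is exactly the prefactor claimed in the lemma. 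Finally, substituting the definition of the untwisted Hall product,
\begin{equation*}
u_M\diamond u_N = \sum_{[L]} \frac{|\mathrm{Ext}^1_{\A}(M,N)_L|}{|\mathrm{Hom}_{\A}(M,N)|}\, u_L,
\end{equation*}
yields the stated formula.

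There is essentially no obstacle in this proof: all the content has been concentrated in Lemma \ref{zhishu}, whose computation already contained the nontrivial linear-algebra manipulation relating $\Lambda_0$ to the (symmetric) Euler form via $\widetilde{B}$ and $\Theta$. The only minor bookkeeping point is to keep the factor of $2$ from $u_M\ast u_N = v^{\langle M,N\rangle} u_M\diamond u_N$ straight so that the coefficient of $\langle\mathbf{m},\mathbf{n}\rangle$ comes out as $3/2$ rather than $1/2$; apart from this, the derivation is a one-line substitution.
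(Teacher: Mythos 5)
Your proof is correct and is exactly the argument the paper intends: unfold the $\Lambda_2$-twisted product via \eqref{lmdc}, expand the Ringel--Hall twist to get the extra $\langle\mathbf{m},\mathbf{n}\rangle$, and substitute Lemma \ref{zhishu}(2) to obtain the exponent $\frac12\langle\mathbf{n},\mathbf{m}\rangle+\frac32\langle\mathbf{m},\mathbf{n}\rangle$. The paper's own proof is just the one-line citation of these two ingredients, so there is nothing to add.
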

\begin{proof}
It is proved by the equation $(\ref{lmdc})$ and Lemma $\ref{zhishu}(2)$.
\end{proof}

\begin{proposition}\label{zhudl}
There exists an isomorphism of algebras $\rho:\mathcal {H}_{v}(\A)\rightarrow\mathcal {H}_{\Lambda_2}(\A)$ defined on basis elements by
$$\rho(u_M)=v^{\frac{1}{2}\lr{{\bf m},{\bf m}}}u_M.$$
\end{proposition}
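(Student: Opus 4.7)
The map $\rho$ rescales each basis element by a nonzero scalar, so it is clearly a linear bijection of underlying vector spaces, with inverse given by $u_M \mapsto v^{-\frac{1}{2}\lr{{\bf m},{\bf m}}}u_M$. The entire content of the proposition is therefore the multiplicativity assertion $\rho(u_M * u_N) = \rho(u_M) \star \rho(u_N)$ for all $M, N \in \A$.

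The plan is to compare both sides against the untwisted Hall product $u_M \diamond u_N = \sum_{[L]} g^L_{MN}\, u_L$, where $g^L_{MN} = |\Ext^1_\A(M,N)_L|/|\Hom_\A(M,N)|$. Unwinding the definition of $*$ in $\mathcal{H}_v(\A)$ and then applying $\rho$ gives
\begin{equation*}
\rho(u_M * u_N) = v^{\lr{{\bf m},{\bf n}}} \sum_{[L]} g^L_{MN}\, v^{\frac{1}{2}\lr{{\bf l},{\bf l}}}\, u_L.
\end{equation*}
On the other side, the scalars $v^{\frac{1}{2}\lr{{\bf m},{\bf m}}}$ and $v^{\frac{1}{2}\lr{{\bf n},{\bf n}}}$ pull out, and the lemma immediately preceding the proposition rewrites $u_M \star u_N$ as $v^{\frac{1}{2}\lr{{\bf n},{\bf m}}+\frac{3}{2}\lr{{\bf m},{\bf n}}}\sum_{[L]}g^L_{MN}\,u_L$, yielding
\begin{equation*}
\rho(u_M) \star \rho(u_N) = v^{\frac{1}{2}\lr{{\bf m},{\bf m}}+\frac{1}{2}\lr{{\bf n},{\bf n}}+\frac{1}{2}\lr{{\bf n},{\bf m}}+\frac{3}{2}\lr{{\bf m},{\bf n}}} \sum_{[L]} g^L_{MN}\, u_L.
\end{equation*}

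Since $g^L_{MN}$ vanishes unless ${\bf l} = {\bf m}+{\bf n}$, matching the coefficient of each $u_L$ reduces the whole claim to the single scalar identity
\begin{equation*}
\lr{{\bf m},{\bf n}} + \frac{1}{2}\lr{{\bf m}+{\bf n},{\bf m}+{\bf n}} = \frac{1}{2}\lr{{\bf m},{\bf m}} + \frac{1}{2}\lr{{\bf n},{\bf n}} + \frac{1}{2}\lr{{\bf n},{\bf m}} + \frac{3}{2}\lr{{\bf m},{\bf n}},
\end{equation*}
which follows at once from bilinear expansion of $\lr{{\bf m}+{\bf n},{\bf m}+{\bf n}}$ in the Grothendieck group. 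No real obstacle is expected here: once the preceding lemma describing $u_M \star u_N$ in $\mathcal{H}_{\Lambda_2}(\A)$ is available, the verification collapses into this one-line bookkeeping, and the only judgement call is to compare both products against the common third expression $u_M \diamond u_N$.
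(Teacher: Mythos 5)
Your proposal is correct and follows essentially the same route as the paper's own proof: both invoke the preceding lemma for $u_M\star u_N$, use that the Hall coefficient forces ${\bf l}={\bf m}+{\bf n}$, and reduce everything to the bilinear expansion of $\lr{{\bf m}+{\bf n},{\bf m}+{\bf n}}$. The only difference is presentational — the paper transforms $\rho(u_M)\star\rho(u_N)$ step by step into $\rho(u_M*u_N)$, whereas you expand both sides and match coefficients.
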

\begin{proof}
For any  $M,N\in\A$, we have
\begin{flalign*}
&\rho(u_M)\star \rho(u_N)=v^{\frac{1}{2}\lr{{\bf m},{\bf m}}+\frac{1}{2}\lr{{\bf n},{\bf n}}}u_M\star u_N\\
&=v^{\frac{1}{2}\lr{{\bf m},{\bf m}}+\frac{1}{2}\lr{{\bf n},{\bf n}}+\frac{1}{2}\lr{{\bf n},{\bf m}}+\frac{3}{2}\lr{{\bf m},{\bf n}}}\sum_{[L]}\frac{|\mathrm{Ext}_{\A}^{1}(M,N)_{L}|}{|\mathrm{Hom}_{\A}(M,N)|}u_L\\
&=v^{\frac{1}{2}\lr{{\bf m},{\bf m}}+\frac{1}{2}\lr{{\bf n},{\bf n}}+\frac{1}{2}\lr{{\bf n},{\bf m}}+\frac{3}{2}\lr{{\bf m},{\bf n}}-\frac{1}{2}\lr{{\bf m}+{\bf n},{\bf m}+{\bf n}}}\sum_{[L]}\frac{|\mathrm{Ext}_{\A}^{1}(M,N)_{L}|}{|\mathrm{Hom}_{\A}(M,N)|}\rho(u_L)\\
&=v^{\lr{{\bf m},{\bf n}}}\sum_{[L]}\frac{|\mathrm{Ext}_{\A}^{1}(M,N)_{L}|}{|\mathrm{Hom}_{\A}(M,N)|}\rho(u_L)\\
&=\rho(u_M\ast u_N).
\end{flalign*}
Since $\rho$ sends the basis of $\mathcal {H}_{v}(\A)$ to the basis of $\H_{\Lambda_2}(\A)$, we obtain $\rho$ is an isomorphism.
\end{proof}

\begin{corollary}\label{qytl}
There exists an algebra homomorphism $\varphi:\mathcal {H}_{v}(\A)\rightarrow\mathcal{T}_{\Lambda_2,q}$ defined on basis elements by
$$\varphi(u_M)=v^{\frac{1}{2}\lr{{\bf m},{\bf m}}}X_M.$$
\end{corollary}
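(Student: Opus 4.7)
The plan is to obtain $\varphi$ as the composition of two algebra maps that have already been constructed in the preceding material, namely the isomorphism $\rho:\mathcal{H}_v(\A)\to\mathcal{H}_{\Lambda_2}(\A)$ from Proposition \ref{zhudl} and the homomorphism $\Psi:\mathcal{H}_{\Lambda_2}(\A)\to\mathcal{T}_{q,\Lambda_2}$ furnished by Theorem \ref{alg-hom-cdx1}. Since Theorem \ref{alg-hom-cdx1} is stated for an arbitrary compatible pair $(\Lambda,\widetilde{B})$ with $\widetilde{B}^{\rm tr}\Lambda=(D,0)$, and Proposition \ref{xrd} together with the choice $\Lambda_2=\frac{1}{2}\Lambda_0$ supplies exactly such a compatible pair, $\Psi$ is available in our present setting.

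First, I would set $\varphi:=\Psi\circ\rho$. Both $\rho$ and $\Psi$ are algebra homomorphisms (the former is in fact an isomorphism), so the composition is automatically an algebra homomorphism from $\mathcal{H}_v(\A)$ to $\mathcal{T}_{q,\Lambda_2}$; no new multiplicative verification is required. Next, I would simply evaluate the composition on the basis: for any $M\in\Iso(\A)$,
\[
\varphi(u_M)=\Psi(\rho(u_M))=\Psi\!\left(v^{\frac{1}{2}\lr{{\bf m},{\bf m}}}u_M\right)=v^{\frac{1}{2}\lr{{\bf m},{\bf m}}}X_M,
\]
which matches the formula in the statement. This is the desired description of $\varphi$ on basis elements.

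There is essentially no obstacle here, because all of the substantive work has been done upstream: the compatibility of $\Psi$ with the twisted multiplication $\star$ in $\mathcal{H}_{\Lambda_2}(\A)$ is the content of Theorem \ref{alg-hom-cdx1}, and the fact that the rescaling $u_M\mapsto v^{\frac{1}{2}\lr{{\bf m},{\bf m}}}u_M$ converts $*$ into $\star$ is precisely Proposition \ref{zhudl}, whose proof relied on Lemma \ref{zhishu}(2). The only point that deserves a line of commentary in the written proof is a pointer to the relation ${\widetilde{B}}^{\rm tr}\Lambda_2=(D,0)$ so that the reader sees why Theorem \ref{alg-hom-cdx1} applies to $\Lambda_2$; with that remark in place the corollary is immediate.
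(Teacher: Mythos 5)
Your proposal is correct and follows exactly the paper's own route: the paper proves Corollary \ref{qytl} by citing Theorem \ref{alg-hom-cdx1} and Proposition \ref{zhudl}, i.e.\ precisely the composition $\varphi=\Psi\circ\rho$ you describe. Your write-up merely makes explicit the evaluation on basis elements and the applicability of Theorem \ref{alg-hom-cdx1} to $\Lambda_2$, which the paper leaves implicit.
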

\begin{proof}
It is proved by Theorem \ref{alg-hom-cdx1} and Proposition \ref{zhudl}.
\end{proof}

\begin{corollary}\label{sy1}
There exists an algebra homomorphism $\varphi:\mathbf{U}^+_v(\mathfrak{g})\rightarrow\A_q(Q)$ defined on generators by
$$\varphi(E_i)=v_i^{\frac{1}{2}}(q_i-1)^{-1}y_i.$$
\end{corollary}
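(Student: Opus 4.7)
The plan is to factor $\varphi$ as the composition of the Ringel--Green homomorphism $\lambda:\mathbf{U}^+_v(\mathfrak{g})\to\mathcal{H}_v(\A)$ from Theorem \ref{ringel} with the algebra homomorphism $\mathcal{H}_v(\A)\to\mathcal{T}_{\Lambda_2,q}$ established in Corollary \ref{qytl}. Both are algebra homomorphisms, so their composition is automatically one; the only tasks are to compute its value on each generator $E_i$ and to confirm that the image actually lands inside the subalgebra $\A_q(Q)\subset\mathcal{F}_{q,\Lambda_2}$.

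For the value on generators, I apply the two defining formulas in sequence:
\[
(\varphi\circ\lambda)(E_i)=\varphi\bigl((q_i-1)^{-1}u_{S_i}\bigr)=(q_i-1)^{-1}v^{\frac{1}{2}\lr{{\bf e}_i,{\bf e}_i}}X_{S_i}.
\]
Because $Q$ is acyclic (in particular has no loop at vertex $i$), $\Ext^1_{\A}(S_i,S_i)=0$, while $\Hom_{\A}(S_i,S_i)=\mathcal{D}_i$ has $k$-dimension $d_i$; hence $\lr{{\bf e}_i,{\bf e}_i}=d_i$ and $v^{\frac{1}{2}\lr{{\bf e}_i,{\bf e}_i}}=v_i^{\frac{1}{2}}$. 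Combining this with the identification $X_{S_i}=y_i$ from Proposition \ref{sy2}, I obtain exactly $v_i^{\frac{1}{2}}(q_i-1)^{-1}y_i$, matching the formula in the statement.

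For the containment $\operatorname{im}(\varphi)\subseteq\A_q(Q)$, I observe that each $y_i$ is a quantum cluster variable arising from a one-step mutation of the initial seed, so $y_i\in\A_q(Q)$ by definition, while the scalars $v_i^{\frac{1}{2}}(q_i-1)^{-1}$ lie in the ground field $\mathbb{Q}(\mathfrak{q}^{\frac{1}{4}})\subset\mathbb{Q}\mathbb{P}$. Since $\mathbf{U}^+_v(\mathfrak{g})$ is generated as an algebra by the $E_i$, the image of the composition is contained in the $\mathbb{Q}\mathbb{P}$-subalgebra of $\mathcal{F}_{q,\Lambda_2}$ generated by $y_1,\ldots,y_n$, which sits inside $\A_q(Q)$. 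No separate verification of the (high order) quantum Serre relations is required, as they are automatically transported from $\mathbf{U}^+_v(\mathfrak{g})$ by the two homomorphisms; indeed, this is exactly the mechanism by which the results of Section 3 upgrade to the quantum Serre relations in $\A_q(Q)$ promised in the introduction.

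The argument is essentially routine once Corollary \ref{qytl} is in hand, and I do not anticipate a serious obstacle. The only point that requires a bit of care is correctly tracking the twisting scalar $v^{\frac{1}{2}\lr{{\bf m},{\bf m}}}$ produced by the isomorphism $\rho$ of Proposition \ref{zhudl}: it is precisely this twist, evaluated at $\mathbf{m}=\mathbf{e}_i$, that produces the extra factor $v_i^{\frac{1}{2}}$ in the formula and thereby cleanly identifies the image of the Ringel--Hall generator with a scalar multiple of the cluster variable $y_i$.
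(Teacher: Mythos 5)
Your proposal is correct and follows essentially the same route as the paper: compose the Ringel--Green homomorphism of Theorem \ref{ringel} with the homomorphism of Corollary \ref{qytl}, evaluate on generators using $\lr{{\bf e}_i,{\bf e}_i}=d_i$ to produce the factor $v_i^{\frac{1}{2}}$, and identify $X_{S_i}$ with $y_i$ via Proposition \ref{sy2}. The additional check that the image lands in $\A_q(Q)$ is a sensible explicit verification of a point the paper leaves implicit.
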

\begin{proof}
It is proved by Theorem \ref{ringel}, Corollary \ref{qytl} and Proposition \ref{sy2}.
\end{proof}

\begin{corollary}\label{serre}
For any $1\leq i\neq j\leq n$, we have the quantum Serre relations
$$\sum_{t=0}^{1-c_{ij}} (-1)^t \qbinom{1-c_{ij}}{t}_{{v}_i} y_i^{1-c_{ij}-t} y_j y_i^{t}=0$$
in the quantum cluster algebra $\A_q(Q)$, where $-c_{ij}=[b_{ij}]_{+}+[-b_{ij}]_{+}$.
\end{corollary}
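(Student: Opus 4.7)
The plan is to simply transport the quantum Serre relations from $\mathbf{U}^+_v(\mathfrak{g})$ into $\A_q(Q)$ via the algebra homomorphism $\varphi:\mathbf{U}^+_v(\mathfrak{g})\to \A_q(Q)$ of Corollary \ref{sy1}, which sends $E_i\mapsto v_i^{\frac{1}{2}}(q_i-1)^{-1}y_i$. All the heavy lifting (namely the construction of $\varphi$ via the Ringel--Green embedding $\lambda$ of Theorem \ref{ringel}, the scaled isomorphism $\rho$ of Proposition \ref{zhudl}, the map $\Psi$ of Theorem \ref{alg-hom-cdx1}, and the identification $y_i=X_{S_i}$ of Proposition \ref{sy2}) is already in place, so the remaining task is essentially bookkeeping of scalars.

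Concretely, I would fix $i\neq j$, set $a:=1-c_{ij}$, and apply $\varphi$ term by term to the Serre relation
\[\sum_{t=0}^{a} (-1)^t \qbinom{a}{t}_{v_i} E_i^{a-t} E_j E_i^{t}=0\]
holding in $\mathbf{U}^+_v(\mathfrak{g})$. Since every summand $E_i^{a-t}E_jE_i^{t}$ contains exactly $a$ factors of $E_i$ and exactly one factor of $E_j$, the constant $\big(v_i^{\frac{1}{2}}(q_i-1)^{-1}\big)^{a}\cdot v_j^{\frac{1}{2}}(q_j-1)^{-1}$ factors out of the sum independently of $t$. This scalar is nonzero, so dividing it off yields the identity
\[\sum_{t=0}^{a} (-1)^t \qbinom{a}{t}_{v_i} y_i^{a-t} y_j y_i^{t}=0\]
in $\A_q(Q)$, which is exactly the desired Serre relation.

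It remains only to verify the stated identity $-c_{ij}=[b_{ij}]_{+}+[-b_{ij}]_{+}$. Since $Q$ is acyclic, for any pair $i\neq j$ at most one of the two possible directed arrows between $i$ and $j$ occurs, so at least one of $r_{ij}=\dim_{\mathcal{D}_i}\Ext^1(S_j,S_i)$ and $r'_{ij}=\dim_{\mathcal{D}_i^{\mathrm{op}}}\Ext^1(S_i,S_j)$ vanishes. Combining this with $b_{ij}=r'_{ij}-r_{ij}$ and $c_{ij}=-r_{ij}-r'_{ij}$ gives $-c_{ij}=r_{ij}+r'_{ij}=|b_{ij}|=[b_{ij}]_{+}+[-b_{ij}]_{+}$.

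There is no real obstacle: the argument is a one-line application of $\varphi$ together with the observation that the prefactor does not depend on $t$. The only point requiring any attention is confirming that $\varphi$ genuinely lands in $\A_q(Q)$ (and not merely in the ambient quantum torus $\mathcal{T}_{q,\Lambda_2}$), which is already built into the statement of Corollary \ref{sy1}, and checking the combinatorial identity above, which is immediate from acyclicity.
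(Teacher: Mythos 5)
Your proof is correct and follows exactly the paper's route: the paper's entire proof of this corollary is to apply the homomorphism $\varphi$ of Corollary \ref{sy1} to the quantum Serre relations in $\mathbf{U}^+_v(\mathfrak{g})$, with the $t$-independent scalar prefactor dividing out as you describe. Your additional verification of $-c_{ij}=[b_{ij}]_{+}+[-b_{ij}]_{+}$ via the no-two-cycles hypothesis is also correct and matches the conventions set up in Section 3.1.
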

\begin{proof}
It is proved by Corollary \ref{sy1}.
\end{proof}

\begin{corollary}\label{cor:high}
Let $1\leq i\neq j\leq n$ and $\varepsilon=\pm 1$. For any positive integers $l,p$ with $p\geq-lc_{ij}$, we have
$$\sum_{t=0}^{p+1} (-1)^t v_i^{\varepsilon(p+lc_{ij})t}\qbinom{p+1}{t}_{{v}_i} y_i^{p+1-t} y_j^l  y_i^{t}=0$$
in the quantum cluster algebra $\A_q(Q)$, where $-c_{ij}=[b_{ij}]_{+}+[-b_{ij}]_{+}$.
\end{corollary}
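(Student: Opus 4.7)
The plan is to transport the high order quantum Serre relation from the Ringel--Hall algebra, through $\mathbf{U}^+_v(\mathfrak{g})$, into $\mathcal{A}_q(Q)$ via the homomorphism $\varphi$ constructed in Corollary \ref{sy1}. More precisely, Theorem \ref{high} gives, for $1\leq i\neq j\leq n$, any positive integers $l,p$ with $p\geq -lc_{ij}$, and $\varepsilon=\pm 1$, the identity
\begin{equation*}
\sum_{t=0}^{p+1} (-1)^t v_i^{\varepsilon(p+lc_{ij})t}\qbinom{p+1}{t}_{v_i} u_{S_i}^{p+1-t} u_{S_j}^{l} u_{S_i}^{t}=0
\end{equation*}
in $\mathcal{H}_v(\mathcal{A})$. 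Since each summand is homogeneous of total multidegree $(p+1,l)$ in $(u_{S_i},u_{S_j})$, and $\lambda(E_i)=(q_i-1)^{-1}u_{S_i}$ by Theorem \ref{ringel}, a common scalar $(q_i-1)^{-(p+1)}(q_j-1)^{-l}$ factors out and injectivity of $\lambda$ yields the same shape of identity for $E_i,E_j$ in $\mathbf{U}^+_v(\mathfrak{g})$.

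Next I would apply $\varphi$ of Corollary \ref{sy1} term by term. Because $\varphi$ is an algebra homomorphism with $\varphi(E_i)=v_i^{1/2}(q_i-1)^{-1}y_i$, each monomial $E_i^{p+1-t}E_j^lE_i^t$ maps to
\begin{equation*}
v_i^{(p+1)/2}v_j^{l/2}(q_i-1)^{-(p+1)}(q_j-1)^{-l}\, y_i^{p+1-t}y_j^{l}y_i^{t}.
\end{equation*}
Crucially, the prefactor is independent of $t$ (this is the only nontrivial point: the exponents of $v_i$ coming from splitting $y_i^{p+1-t}$ and $y_i^{t}$ add up to $(p+1)/2$, not to a $t$-dependent quantity, because $\varphi$ is a genuine homomorphism rather than a twisted one). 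Consequently, factoring out the common nonzero scalar from the image of the quantum Serre relation gives
\begin{equation*}
\sum_{t=0}^{p+1}(-1)^t v_i^{\varepsilon(p+lc_{ij})t}\qbinom{p+1}{t}_{v_i} y_i^{p+1-t}y_j^{l}y_i^{t}=0
\end{equation*}
inside $\mathcal{A}_q(Q)$, as desired.

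There is no real obstacle beyond bookkeeping. The content of the argument is entirely supplied by the upstream results: Theorem \ref{high} provides the relation in the Hall algebra, Theorem \ref{ringel} transports it to $\mathbf{U}^+_v(\mathfrak{g})$ (using homogeneity to absorb the $(q_i-1)$-type scalars), and Corollary \ref{sy1} transports it to $\mathcal{A}_q(Q)$ (using homogeneity again to absorb the $v_i^{1/2}$-type scalars). The mild point to be careful about is that the $t$-dependence of the $v_i$-exponents in the final identity comes \emph{only} from the $v_i^{\varepsilon(p+lc_{ij})t}$ factor present in the original Hall-algebra relation; the redistribution of $y_i$'s on either side of $y_j^l$ contributes a constant power of $v_i$, which is why the $\varepsilon(p+lc_{ij})t$ exponent survives unchanged.
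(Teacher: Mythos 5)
Your proof is correct and follows essentially the same route as the paper: the paper applies the homomorphism $\varphi$ of Corollary \ref{qytl} directly to the Hall-algebra relation of Theorem \ref{high}, whereas you take a harmless detour through $\mathbf{U}^+_v(\mathfrak{g})$ before landing in $\mathcal{A}_q(Q)$; in both cases the key observation is that the scalar prefactors are $t$-independent by homogeneity and can be cancelled.
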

\begin{proof}
It is proved by Theorem \ref{high} and Corollary \ref{qytl}.
\end{proof}

In the end, let us give an example to illustrate the fundamental relations and quantum Serre relations. In the following, we write $[a,b]_t=ab-tba$ for any $a,b\in\mathcal{T}_{\Lambda,q}$ and $t\in\mathbb{Q}(v)$.
\begin{example}
Let $Q$ be the quiver $1\longrightarrow 2$ and $D=I_2$. Then $\widetilde{B}=\begin{pmatrix}0&1\\-1&0\\1&0\\0&1\end{pmatrix}$,
$R=\begin{pmatrix}0&0\\1&0\end{pmatrix}$ and $E=I_2-R^{\rm tr}=\begin{pmatrix}1&-1\\0&1\end{pmatrix}$. Thus, we have
$$\Lambda_1=\begin{pmatrix}0&0&-1&0\\0&0&0&-1\\1&0&0&-1\\0&1&1&0\end{pmatrix},~\Theta=\begin{pmatrix}-1&1&-1&0\\0&-1&1&-1\\1&0&0&0\\0&1&0&0\end{pmatrix}~\text{and}~\Gamma=\begin{pmatrix}0&1&-2&1\\-1&0&1&-2\\2&-1&0&1\\-1&2&-1&0\end{pmatrix}.$$
By direct calculations, we obtain $$\Lambda_2=\frac{1}{2}\Lambda_0=\frac{1}{2}(\Theta^{-1})^{\rm tr}\Gamma \Theta^{-1}=\frac{1}{2}\begin{pmatrix}0&1&-1&0\\-1&0&0&-1\\1&0&0&-1\\0&1&1&0\end{pmatrix}.$$
By the mutation formula $(\ref{mutation})$, we have
\begin{flalign*}y_1=X^{e_3-e_1}+X^{e_2-e_1}\quad\text{and}\quad y_2=X^{e_1+e_4-e_2}+X^{-e_2}.\end{flalign*}

In $\mathcal{T}_{q,\Lambda_1}$, we have
\begin{flalign*}\sum_{t=0}^{2} (-1)^t v^{-t}\qbinom{2}{t}_{{v}} y_1^{2-t} y_2 y_1^{t}&=[y_1,[y_1,y_2]]_{q^{-1}}\\
&=[X^{e_3-e_1}+X^{e_2-e_1},(v^{-1}-v)X^{e_4}]_{q^{-1}}\\
&=0,\end{flalign*}
which is the fundamental relation.

Now, let us check the high order fundamental relations (with $\varepsilon=1$, $l=2$ and $p=2$) in Theorem \ref{dyzhu}. Note that
\begin{flalign*}\sum_{t=0}^{3} (-1)^t q^{-t}\qbinom{3}{t}_{{v}} y_1^{3-t} y_2^2 y_1^{t}=[y_1,[y_1,[y_1,y_2^2]_{q^{-2}}]_{q^{-1}}].\end{flalign*}

By direct calculations, in $\mathcal{T}_{q,\Lambda_1}$ we have
\begin{flalign*}y_2^2&=(X^{e_1+e_4-e_2}+X^{-e_2})^2\\&=X^{2e_1+2e_4-2e_2}+(v+v^{-1})X^{e_1+e_4-2e_2}+X^{-2e_2}\end{flalign*}
and then
\begin{flalign*}[y_1,y_2^2]_{q^{-2}}=(1-q^{-2})(X^{e_1+e_3+2e_4-2e_2}+(v+v^{-1})X^{e_3+e_4-2e_2}+X^{e_3-e_1-2e_2}+X^{e_4-e_2}+X^{-e_1-e_2}).\end{flalign*}
Thus, in $\mathcal{T}_{q,\Lambda_1}$ we have
\begin{flalign*}[y_1,[y_1,y_2^2]_{q^{-2}}]_{q^{-1}}&=(1-q^{-1})(1-q^{-2})(X^{2e_3+2e_4-2e_2}+(v+v^{-1})X^{2e_3+e_4-e_1-2e_2}+X^{2e_3-2e_1-2e_2}+\\
&~~\quad(v+v^{-1})X^{e_3+e_4-e_1-e_2}+(v+v^{-1})X^{e_3-2e_1-e_2}+X^{-2e_1}).\end{flalign*}
Hence, in $\mathcal{T}_{q,\Lambda_1}$ we obtain
\begin{flalign*}[y_1,[y_1,[y_1,y_2^2]_{q^{-2}}]_{q^{-1}}]&=(1-q^{-1})(1-q^{-2})(q-q^{-1})(X^{2e_3+e_4-2e_1-e_2}+X^{2e_3-3e_1-e_2}\\&~~\quad-X^{2e_3+e_4-2e_1-e_2}
-X^{2e_3-3e_1-e_2}+X^{e_3-3e_1}-X^{e_3-3e_1})\\
&=0.\end{flalign*}

In $\mathcal{T}_{q,\Lambda_2}$, we have
\begin{flalign*}\sum_{t=0}^{2} (-1)^t \qbinom{2}{t}_{{v}} y_1^{2-t} y_2 y_1^{t}&=[y_1,[y_1,y_2]_v]_{v^{-1}}\\
&=[X^{e_3-e_1}+X^{e_2-e_1},(v^{-1}-v)v^{\frac{1}{2}}X^{e_4}]_{v^{-1}}\\&=0,\end{flalign*}
which is the quantum Serre relation.

Now, let us check the high order quantum Serre relations (with $l=2$ and $p=2$) in Corollary \ref{cor:high}. Note that
\begin{flalign*}\sum_{t=0}^{3} (-1)^t \qbinom{3}{t}_{{v}} y_1^{3-t} y_2^2 y_1^{t}=[y_1,[y_1,[y_1,y_2^2]_{q}]]_{q^{-1}}.\end{flalign*}
By direct calculations, in $\mathcal{T}_{q,\Lambda_2}$ we have
\begin{flalign*}y_2^2&=(X^{e_1+e_4-2_2}+X^{-e_2})^2\\&=X^{2e_1+2e_4-2e_2}+(v+v^{-1})X^{e_1+e_4-2e_2}+X^{-2e_2}\end{flalign*}
and then
\begin{flalign*}[y_1,y_2^2]_{q}=v(q^{-1}-q)(X^{e_1+2e_4-e_2}+X^{e_4-e_2}).\end{flalign*}
Thus, in $\mathcal{T}_{q,\Lambda_2}$ we have
\begin{flalign*}[y_1,[y_1,y_2^2]_{q}]=(q^{-1}-1)(1-q^2)X^{2e_4}.\end{flalign*}
Hence, in $\mathcal{T}_{q,\Lambda_2}$ we obtain
\begin{flalign*}[y_1,[y_1,[y_1,y_2^2]_{q}]]_{q^{-1}}=(q^{-1}-1)(1-q^2)[X^{e_3-e_1}+X^{e_2-e_1},X^{2e_4}]_{q^{-1}}=0.\end{flalign*}
\end{example}

%\section*{Acknowledgments}
%H. Zhang is partially supported by the National Natural Science Foundation of China (No. 12271257) and Natural Science Foundation of Jiangsu Province of China (No. BK20240137).

\end{document}